\documentclass[11pt]{article}

\usepackage[a4paper,margin=31mm]{geometry}

\usepackage{amsmath,amssymb,amsthm,mathtools}
\usepackage{booktabs,array}
\usepackage{tikz}
\usetikzlibrary{arrows.meta,positioning,decorations.markings}
\usepackage[colorlinks=true,linkcolor=blue!55!black,citecolor=blue!55!black,urlcolor=blue!55!black]{hyperref}
\usepackage{enumitem}
\usepackage{microtype}
\hypersetup{pdftitle={Alternating Hamilton Paths and Cycles in Poset Products}}
\newtheorem{theorem}{Theorem}[section]
\newtheorem{lemma}[theorem]{Lemma}
\newtheorem{proposition}[theorem]{Proposition}
\newtheorem{corollary}[theorem]{Corollary}
\theoremstyle{definition}

\newtheorem{conj}[theorem]{Conjecture}
\newtheorem{remark}[theorem]{Remark}

\newcommand{\La}{\operatorname{La}}

\title{Alternating Hamilton Paths and Cycles in Poset Products}
\author{Yuxian Dong, Jianxi Mao
\thanks{
	Corresponding author.
    \newline\hspace*{3mm}
    {\it Email address:}\quad
    yuxiand@hotmail.com (Yuxian Dong); maojx@dlut.edu.cn (Jianxi Mao)
    }}
\date{\footnotesize
 School of Mathematical Sciences, Dalian University of Technology, Dalian 116024, P.R. China\\
}
\begin{document}

\maketitle

\begin{abstract}
We study alternating Hamilton paths and cycles in the comparability graphs of finite posets.
We prove that if a finite poset $P$ with at least two elements
admits an alternating Hamilton path, then $P\times B_r$
admits an alternating Hamilton cycle for every $r\ge2$,
where $B_r$ is the Boolean lattice.
As a corollary,
we settle a conjecture of Katona [Discrete Appl. Math., 2013].
Furthermore, we characterize the divisor lattices admitting alternating Hamilton paths and cycles.
\end{abstract}

\noindent
{\textbf{MSC 2020:}} Primary 06A07; Secondary 05C45, 05D05.

\noindent
{\textbf{Keywords:}} Alternating Hamilton path; alternating Hamilton cycle; Boolean lattice; divisor lattice

\section{Introduction}
The \emph{Boolean lattice} $B_n$ is the poset of all subsets of $[n]=\{1,\ldots,n\}$, ordered by inclusion.
A \emph{Hamilton cycle} is a cycle that visits every vertex exactly once.
Hamilton cycles in the cover graph of $B_n$ correspond to cyclic binary Gray codes; see Savage~\cite{Savage} for a survey.
M\"utze~\cite{Mutze} proved that for every $k\ge1$, 
the subgraph of the cover graph of $B_{2k+1}$ induced by the two middle levels admits a Hamilton cycle,
confirming a conjecture of Havel~\cite{Havel} and Buck and Wiedemann~\cite{BuckWiedemann}.
Katona~\cite{Katona} investigated alternating Hamilton cycles in $B_n$
when studying snake-free families.
In this paper, we study alternating Hamilton paths and cycles
in poset products, in particular, products of a poset with a Boolean lattice or a chain.

Let $P$ be a finite poset and let $|P|$ denote its cardinality.
The \emph{comparability graph} of $P$ has vertex set $P$, with two distinct vertices adjacent if and only if they are comparable in $P$.
For example, the comparability graph of a chain  with $|P|$ elements is the complete graph $K_{|P|}$.
All paths and cycles considered below are taken in comparability graphs.
An~\emph{alternating Hamilton path} (for short, \emph{AHP}) in $P$ is an ordering $x_1,\ldots,x_{|P|}$ of all elements of $P$ such that
\[
x_1<x_2>x_3<x_4>\cdots \qquad \textrm{or} \qquad x_1>x_2<x_3>x_4<\cdots.
\]
By convention, a singleton admits an alternating Hamilton path.
An \emph{alternating Hamilton cycle} (for short, \emph{AHC}) in $P$ with $|P|\ge 4$ is a cyclic ordering of all the elements such that
\[
 x_1<x_2>x_3<\cdots<x_{|P|}>x_1\qquad \textrm{or} \qquad x_1>x_2<x_3>\cdots >x_{|P|}<x_1.
\]
Clearly, if $P$ admits an AHC, then it admits an AHP.
Katona conjectured the following.

\begin{conj}[{\cite[Conjecture in Section~8]{Katona}}]
\label{conj:katona}
For every $n\ge3$, $B_n$ admits an AHC.
\end{conj}

For finite posets $P$ and $Q$, their \emph{product} $P\times Q$ is the set of pairs $(p,q)$ with 
$p\in P$ and $q\in Q$,
ordered by $(p,q)\le(p',q')$ if and only if $p\le p'$ in $P$ and $q\le q'$ in $Q$.
Note that $B_{n}\times B_{m}\cong B_{n+m}$.
We now present our first result on AHPs and AHCs in products with Boolean lattices.

\begin{theorem}
\label{thm:intro-boolean}
Let \(P\) be a finite poset with $|P|\ge 2$.
\begin{enumerate}[label=(\roman*), font=\upshape]
\item If $P$ admits an AHP, then $P\times B_r$ admits an AHC for every $r\ge2$.
\item If $P$ admits an AHC, then $P\times B_r$ admits an AHP for every $r\ge0$.
\end{enumerate}
\end{theorem}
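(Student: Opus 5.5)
The plan is to reduce everything to two small \emph{product lemmas} and then run an induction on $r$ in steps of $2$, using $B_{r+2}\cong B_r\times B_2$. Throughout I fix an AHP $x_1,\ldots,x_n$ of $P$. I classify each $x_i$ as a \emph{valley} (smaller than its path neighbours) or a \emph{peak}, and write elements of $P\times B_s$ as $(p,S)$.

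First I record why $r\ge2$ is needed in (i). In $Q\times B_1=Q\times\{0,1\}$, an edge from a copy-$1$ vertex down to a copy-$0$ vertex is possible, but $(p,1)<(q,0)$ never holds. Hence in a hypothetical AHC every copy-$1$ valley must be joined to two copy-$1$ peaks. A counting argument then shows that the copy-$1$ peaks have no edges left for crossing. So $Q\times B_1$ never has an AHC, and one must jump by $B_2$.

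\textbf{Lemma A.} If $P$ has an AHP and $|P|\ge2$, then $P\times B_2$ has an AHC.

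I would build the cycle from four copies of the path, one in each layer $P\times\{S\}$ with $S\in\{\emptyset,\{1\},\{2\},\{1,2\}\}$. Some copies are traversed forwards and some backwards. They are glued by crossing edges of the forms $(x_i,\emptyset)<(x_j,\{1\})$, $(x_i,\{1\})<(x_j,\{1,2\})$, and so on, where a valley of a lower layer is joined to a peak of a higher layer with $x_i\le x_j$. To close up, I need each crossing to go from a valley to a peak, consistently with the alternation. The freedom available is to choose the endpoint $x_1$ or $x_n$ and the direction in each layer, and to use the edge $(x_i,S)<(x_i,T)$ for $S\subsetneq T$. I would split into the cases $n$ even and $n$ odd, since the parity determines whether the two ends of the path are both valleys, both peaks, or one of each. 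Possibly I would also split by the shape at the ends, namely whether $x_1<x_2$. In each case I would write the cycle explicitly, going $\emptyset\to\{1\}\to\{1,2\}\to\{2\}\to\emptyset$ and splitting one layer's path if necessary so that the total count of peaks equals that of valleys. I expect this explicit gluing to be the main obstacle, especially the odd case. There the path has one more valley than peak (or vice versa), and the surplus must be balanced across the four layers, for example by using two layers with opposite orientation.

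\textbf{Lemma B.} If $P$ has an AHP (respectively an AHC), then $P\times B_1$ has an AHP.

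Here I would traverse layer $0$ along the path so that it ends at a valley. I then step up to a peak of layer $1$, e.g.\ from $(x_n,0)$ to $(x_n,1)$ or to $(x_{n-1},1)$, and traverse layer $1$ backwards. If an AHC is available, I first rotate it so that the break point has the right parity. Lemma B is only needed to get odd $r$ started.

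With these lemmas the induction is as follows. For (i), Lemma A gives the case $r=2$. For $r=3$, Lemma B gives an AHP of $P\times B_1$, and Lemma A applied to $P\times B_1$ gives an AHC of $P\times B_3$. For $r\ge4$, $P\times B_{r-2}$ has an AHC by induction, hence an AHP, and Lemma A applied to it gives an AHC of $P\times B_r$. For (ii), the case $r=0$ is immediate because an AHC yields an AHP. The case $r=1$ is Lemma B. For $r\ge2$, the AHC of $P$ gives an AHP of $P$, and then (i) gives an AHC, hence an AHP, of $P\times B_r$.
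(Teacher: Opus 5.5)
\textbf{Gap: the base case $r=3$ of part~(i).} Here $P$ is only assumed to have an AHP, so you are using the AHP half of Lemma~B. That half is false. The snake $S_5$ has an AHP, but $S_5\times B_1\cong S_5\times C_1$ has none (Remark~\ref{rem:chain-sharpness}).

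Your own sketch shows where it fails. Suppose layer $0$ ends at the valley $(x_n,0)$. Stepping up to $(x_n,1)$ gives $(x_n,0)<(x_n,1)<(x_{n-1},1)$, which is not alternating. Stepping to $(x_{n-1},1)$ instead leaves $(x_n,1)$ stranded. The construction only goes through when the closing edge of an AHC lets you rotate, so that layer $0$ ends at $(x_1,0)$ and then jumps to $(x_2,1)$. That is exactly Lemma~\ref{lem:b1}.

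The true (AHC) half of Lemma~B still rescues part~(ii). It also gives every odd $r\ge5$ in part~(i): Lemma~A gives an AHC of $P\times B_2$, Lemma~B then gives an AHP of $P\times B_3$, and Lemma~A gives an AHC of $P\times B_5$. But $r=3$ for a poset with merely an AHP needs its own construction. The paper supplies it directly in Lemma~\ref{lem:b3}, by an explicit gluing of eight layer copies.

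\textbf{Lemma~A is not constructed, and whole-copy gluing cannot work.} Lemma~A carries all the weight, and the gluing you describe fails if whole copies of $x_1,\ldots,x_n$ are joined end to start. For odd $n$, both ends of every traversal have the same type (two valleys or two peaks) in either direction. So no crossing edge between consecutive traversals can alternate, and flipping orientations does not help. For even $n$, every such crossing requires $x_1$ and $x_n$ to be comparable, which an AHP does not provide.

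The missing idea is to run the four layer copies only along $x_2,\ldots,x_{n-1}$. The eight vertices $(x_1,A)$ and $(x_n,A)$ then serve as connectors, including detours inside one fibre such as $(x_2,\{2\})>(x_1,\{2\})<(x_1,\{1,2\})>(x_1,\{1\})<(x_2,\{1,2\})$. These connectors use only the comparabilities between $x_1,x_2$ and between $x_{n-1},x_n$. In the odd case, with the AHP normalized to start $x_1<x_2$, the two detours turn the layer-valleys $(x_1,\{1,2\})$ and $(x_n,\{1,2\})$ into peaks. This is the balancing you anticipated.

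\textbf{A false preliminary remark.} Your claim that $Q\times B_1$ never has an AHC is false. Your count only shows that there are $2(p_1-v_1)$ crossing edges, where $p_1$ and $v_1$ are the numbers of copy-$1$ peaks and valleys, and this can be positive. For example, $B_1\times C_2$ has the AHC given in Lemma~\ref{lem:b1-chain}. The claim is not used in your argument, but it should be dropped.
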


Since $B_1$ admits an AHP, taking $P=B_1$ and $r=n-1\ge2$ in Theorem~\ref{thm:intro-boolean} (i),
we obtain that $B_n\cong B_1\times B_{n-1}$ admits an AHC for every $n\ge3$.
This proves Conjecture~\ref{conj:katona}.
\begin{corollary}\label{coro:katona}
Conjecture~\ref{conj:katona} is true.
\end{corollary}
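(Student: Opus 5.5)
The corollary is immediate from Theorem~\ref{thm:intro-boolean}(i), so the plan is simply to verify its hypotheses and the isomorphism. Fix $n\ge3$ and set $P=B_1$ and $r=n-1\ge2$.

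First, $|B_1|=2$ and $B_1=\{\emptyset,\{1\}\}$ admits the AHP $\emptyset<\{1\}$. A single comparable pair is a valid alternating ordering of both elements, so $B_1$ satisfies the hypotheses of part~(i).

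Next, I would check the isomorphism $B_1\times B_{n-1}\cong B_n$, which is the instance $m=1$ of the identity $B_n\times B_m\cong B_{n+m}$ noted before the theorem. Explicitly, $(A,S)\mapsto A\cup\{s+1:s\in S\}$ is an order-preserving bijection whose inverse is also order-preserving. Since comparability graphs, and hence AHCs, are invariant under poset isomorphism, the AHC of $B_1\times B_{n-1}$ supplied by Theorem~\ref{thm:intro-boolean}(i) transports to an AHC of $B_n$. Note that $|B_n|\ge 8\ge4$, so the definition of an AHC applies.

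Since $n\ge3$ was arbitrary, Conjecture~\ref{conj:katona} holds. There is no real obstacle here. All the difficulty sits inside the proof of Theorem~\ref{thm:intro-boolean}(i), presumably an inductive construction on $r$ from the base case $P\times B_2$, and this corollary only applies that theorem.
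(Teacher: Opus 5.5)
Your proposal is correct and is the paper's argument: set $P=B_1$ and $r=n-1\ge2$ in Theorem~\ref{thm:intro-boolean}(i) and use $B_n\cong B_1\times B_{n-1}$. Your explicit isomorphism and your check that $B_1$ has an AHP simply spell out what the paper leaves implicit.
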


It is known~\cite[Section~3.3]{EC1} that Boolean lattices arise as special cases of divisor lattices.
For a positive integer $n$, the \emph{divisor lattice} $D_n$ is the poset of positive divisors of $n$, ordered by divisibility.
Every divisor lattice is isomorphic to a product of chains.

For $m\ge0$, let $C_m$ denote the chain poset with $m+1$ elements $1<2<\cdots<m+1$, where $C_0$ is a singleton. 
Suppose that
$n=p_1^{m_1}\cdots p_d^{m_d}$, where $p_1,\ldots,p_d$ are distinct primes and $m_1,\ldots,m_d$ are nonnegative integers. Then
$$
D_n\cong C_{m_1}\times\cdots\times C_{m_d}.
$$
In particular, when $m_1=\cdots=m_d=1$, we have $D_n\cong B_d$.
We refer the reader to~\cite{Anderson, EC1} for more details. 
We now establish the following result on AHPs and AHCs in products with chains.

\begin{theorem}
\label{thm:intro-chains}
Let $P$ be a finite poset.
\begin{enumerate}[label=(\roman*), font=\upshape]
\item If $P$ admits an AHP, then $P\times C_m$ admits an AHP for every $m\ne1$.
\item If $P$ admits an AHP, then $P\times C_m$ admits an AHC for every odd $m\ge3$.
\item If $P$ admits an AHC, then $P\times C_m$ admits an AHC for every $m\ne1$.
\item If $P$ admits an AHC, then $P\times C_m$ admits an AHP for every $m\ge0$.
\end{enumerate}
\end{theorem}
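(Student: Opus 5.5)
Throughout, I write an AHP/AHC as a sequence of elements, each labelled \emph{peak} or \emph{valley}, such that consecutive elements are comparable with the peak above. The plan is to build every required object in $P\times C_m$ layer by layer. The copies $P\times\{j\}$, $j=1,\dots,m+1$, are all isomorphic to $P$, and any two elements of layers $j<j'$ are comparable exactly when they are comparable in $P$ or equal in the $P$-coordinate. The first step is to record how a path may cross between layers. If $x_1,\dots,x_k$ is an AHP of $P$, then inside layer $j$ we use $(x_1,j),\dots,(x_k,j)$. Its last vertex may be joined to a vertex $(y,j')$ with $j'>j$ whenever $y\ge x_k$, and in particular when $y=x_k$. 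A vertical edge $(x,j)<(x,j')$ is always available, and this is the main tool.

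For (i) and (iv) I would snake through the layers, reversing the path direction from one layer to the next. Traverse layer $1$ forwards, ending at $(x_k,1)$, step to $(x_k,2)$, traverse layer $2$ backwards, and so on. The obstruction is parity. Each step up must go from a valley to a peak, since the upper vertex is larger. Reversing an AHP preserves the peak/valley pattern only if $k$ is odd; if $k$ is even, the reversed path starts with the opposite type. I therefore split into cases on the parity of $|P|$ and on the ending types.

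\emph{Case $k$ odd, both ends valleys.} Jumps of the form valley to peak are impossible. Instead I would pair layers: traverse layer $j$, then move to layer $j+1$ by a vertical edge in which $(x_k,j+1)$ plays the role of a peak and $(x_k,j)$ that of a valley. This requires re-labelling the path in layer $j$ so that it starts and ends with valleys. I would handle it by splitting a layer into two halves and interleaving two consecutive layers, using zigzags $(x_i,j)<(x_i,j+1)>(x_{i+1},j)<\cdots$. Such a zigzag is legitimate because $(x_i,j+1)>(x_{i+1},j)$ holds whenever $x_i>x_{i+1}$ and, in the other case, can be repaired by swapping the roles of the two layers.

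\emph{Two-layer gadget and the value of $m$.} This interleaving gives an AHP of $P\times C_1$ with prescribed endpoints in many cases, though not all; that is why $m=1$ is excluded. Three consecutive layers handled together absorb the parity defect, so every $m\ne1$ is obtained by combining blocks of $2$ and $3$ layers, since every integer $m+1\ge 3$ is such a sum and $m=0$ is trivial. I would first verify the base gadgets, namely $P\times C_2$ (three layers) and $P\times C_1$ used as a connector, by explicit orderings built from the AHP. Then I would glue them by induction on $m$.

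\emph{Cycles, (ii) and (iii).} For (iii), an AHC of $P$ has even length and can be rotated to start at any vertex. I would cut the AHC of each layer at a chosen edge $x_i<x_{i+1}$ and reconnect consecutive layers with two vertical-type edges. The cycle then climbs through the layers and closes up through the bottom layer, with gadgets again handling $m\ne1$. For (ii), odd $m$ gives an even number $m+1$ of layers, and I pair them into $P\times C_1$ blocks. Each block $P\times C_1$ carries the zigzag $(x_1,1),(x_1,2),(x_2,1),\dots$, which I would turn into a Hamilton path of the block whose two ends are both available for jumps. The blocks then chain up and back down, and $m\ge3$ provides the room to close the cycle. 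Part (iv) is the cycle of (iii) opened up, and for $m=1$ it follows from the zigzag directly, because the cyclic structure lets us choose the cut point.

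The main obstacle I expect is the bookkeeping of peak/valley parity at the junctions between layers, especially when $|P|$ is odd and the AHP begins and ends with the same type. I would first settle the explicit constructions for $P\times C_1$ and $P\times C_2$ with controlled endpoint types, and derive everything else from these by induction on $m$.
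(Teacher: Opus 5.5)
There is a genuine gap. None of the gadgets your argument depends on is actually constructed, and the mechanisms you do describe concretely fail.

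(1) The naive snake is blocked for every parity, not just some. Suppose layer $j$ ends at $(x_k,j)$ and layer $j+1$ is run backwards from $(x_k,j+1)$. Both copies of $x_k$ then have the same type relative to their in-layer neighbour $x_{k-1}$, whereas the vertical step $(x_k,j)<(x_k,j+1)$ needs a valley below a peak.

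(2) The two-layer zigzag cannot be repaired by ``swapping the roles of the layers''. If the pairs $\{(x_i,j),(x_i,j+1)\}$ are visited consecutively, they occupy positions $2i-1,2i$. Alternation therefore forces the same orientation in every pair, and the junctions require $x_1>x_2>\cdots$ (or $x_1<x_2<\cdots$). So the AHP of $P$ would have to be a chain.

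(3) Most seriously, $P\times C_1$ cannot serve as a building block traversed in one piece. As Remark~\ref{rem:chain-sharpness} notes, $S_5\times C_1$ has no AHP at all. In layer $2$, the copies of $x_1$ and $x_5$ each have exactly one comparable element above and one below, so both must be endpoints, and a short check rules out every alternating path between them. Hence for $P=S_5$ and $m=3$ your plan for (ii) is impossible: it needs two $P\times C_1$ blocks, each carrying a Hamilton path. The same failure occurs in (i) whenever writing $m+1$ as a sum of $2$'s and $3$'s needs a $2$. The theorem is true there, but not by block gluing.

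(4) The three-layer ``parity absorber'' and the closing gadget for (iii) are only asserted. Suppose each layer is run once along its cut AHC and the path climbs by steps $(x_i,j)<(x_{i+1},j+1)$. It then ends at the valley $(x_i,m+1)$, which can never be joined back to the peak $(x_{i+1},1)$. So some layers must be run downwards, and you do not say how.

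Likewise, (iv) with $m=1$ does not come from the zigzag. It comes from running both layers in the \emph{same} direction along the cut cycle: $(x_2,1),\dots,(x_{|P|},1),(x_1,1)<(x_2,2),\dots,(x_1,2)$.

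What is missing is a construction that interleaves layers vertex by vertex rather than block by block. The paper's key step (Lemma~\ref{lem:chain-extension}) passes from $P\times C_m$ to $P\times C_{m+2}$. It shifts every $(x,j)$ with $j\ge2$ to $(x,j+2)$, and replaces each $(x,1)$ by $(x,1)<(x,3)>(x,2)$ if it is a valley and by $(x,3)>(x,1)<(x,2)$ if it is a peak. This works because a bottom-layer vertex adjacent to a higher layer is necessarily a valley, and each triple has the same end types as the vertex it replaces. A three-case check then shows that alternation is preserved, for paths and cycles alike. For $m=0$ this is exactly your missing three-layer block $P\times C_2$; for $m\ge1$ it threads the new layers into the existing path instead of appending them.

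Starting from $P\times C_0\cong P$, this gives all even $m$. Odd $m\ge3$ come from the base case $P\times C_3$. The paper obtains it from the AHC of $P\times B_2$ (Lemma~\ref{lem:b2}), since $C_3$ refines $B_2$ under $\varnothing,\{1\},\{2\},\{1,2\}\mapsto1,2,3,4$. That cycle runs through the interiors of the four layers in the order $1,3,4,2$, with three-layer gadgets at $x_1$ and $x_{|P|}$. This is precisely the kind of non-block object your plan would have to supply.
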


We now present the following result on AHPs and AHCs in divisor lattices.
\begin{theorem}
\label{thm:intro-products}
For a positive integer $n$, let $D_n$ be the divisor lattice. Then 
\begin{enumerate}[label=(\roman*), font=\upshape]
\item $D_n$ admits an AHP.
\item $D_n$ admits an AHC if and only if $|D_n|$ is even, $D_n\ncong C_1$ and $D_n\ncong B_2$.
\end{enumerate}
\end{theorem}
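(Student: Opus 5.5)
Part (i) follows by induction on the number $d$ of prime factors, using Theorem~\ref{thm:intro-chains} and Theorem~\ref{thm:intro-boolean}. Write $D_n\cong C_{m_1}\times\cdots\times C_{m_d}$ and discard the factors with $m_i=0$. The base case is a singleton, which admits an AHP by convention. Any chain $C_m$ admits an AHP, since its comparability graph is complete and we may list its elements as $1,m+1,2,m,\dots$.

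For the inductive step, let $P$ be a product of chains admitting an AHP, and form $P\times C_m$. If $m\ne1$, Theorem~\ref{thm:intro-chains}(i) gives an AHP directly. If $m=1$, set aside the factors equal to $C_1$ and handle them together: if there are $r\ge2$ of them, they form $B_r$, and Theorem~\ref{thm:intro-boolean}(i) gives an AHC (hence an AHP) for $P\times B_r$ whenever $|P|\ge2$. If $P$ is a singleton, then $B_r$ itself needs an AHP. For $r\ge3$ this is Corollary~\ref{coro:katona}. For $r\le2$ we check directly: $B_1$ is a chain, and $B_2$ has AHP $\emptyset<\{1\}>\emptyset$? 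No, that repeats; the right path is $\{1\}>\emptyset<\{2\}<\{1,2\}$? That is not alternating either, so use $\{1\}<\{1,2\}>\{2\}>\emptyset$? Also no. We therefore try $\emptyset<\{1\}$, then to $\{1,2\}$? The lesson is that $B_2$ needs care, so check it explicitly. The comparability graph of $B_2$ is $K_4$ minus the edge $\{1\}\{2\}$, and the path $\{1\}>\emptyset<\{1,2\}>\{2\}$ is alternating.

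The remaining case is a single factor $C_1$ together with $P$ having an AHP, where $|P|\ge2$. Here I reorder the factors: if some other factor is $C_m$ with $m\ge2$, apply chain induction last with that factor, so that the $C_1$ is absorbed earlier. If all factors are $C_1$, we are in $B_d$, which is already handled. A cleaner route is to first build the product of all non-$C_1$ chains, then use Theorem~\ref{thm:intro-boolean}(i) for $r\ge2$. When $r=1$, use $P\times C_1$ with $P$ a nontrivial product of longer chains: since $C_1\times C_m\cong C_m\times C_1$, start from $C_1$ (which has an AHP) and multiply by the $C_m$'s using Theorem~\ref{thm:intro-chains}(i).

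For the forward direction of part (ii), I give a parity and bipartiteness argument. An alternating cycle alternates between local minima and local maxima, so $|D_n|$ must be even. The poset $C_1$ has only 2 elements. For $B_2$, the cycle has exactly two minima, and both $\{1\}$ and $\{2\}$ must be in the same role as one another, since they are incomparable. Say they are minima; then both maxima are above both of them, forcing the maxima to be $\{1,2\}$ twice, which is a contradiction. The case where they are maxima is symmetric, with both minima forced to be $\emptyset$.

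The converse of part (ii) is the main obstacle. Since $|D_n|=\prod(m_i+1)$ is even, some $m_i$ is odd. If some odd $m_i\ge3$, take $P$ to be the product of the other factors, which has an AHP by (i), and apply Theorem~\ref{thm:intro-chains}(ii). Otherwise every odd exponent equals 1. If at least two exponents equal 1, let $P$ be the product of the remaining factors. If $P$ is nontrivial, Theorem~\ref{thm:intro-boolean}(i) gives an AHC. If $P$ is trivial, we are in $B_r$ with $r\ge3$, which is Corollary~\ref{coro:katona}, while $B_2$ is excluded.

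The hard subcase is exactly one exponent equal to 1 and all other exponents even (and at least 2), that is, $C_1\times C_{2k_1}\times\cdots$. Here I would first construct an AHC in $C_1\times C_{2k}$ by hand; it is a $2\times(2k+1)$ grid. Then I would extend it to further even chains by Theorem~\ref{thm:intro-chains}(iii), since $2k\ne1$. The explicit AHC in $C_1\times C_m$ for even $m\ge2$ is the step I expect to require a genuine construction. I would try zigzagging between the two rows, with $(1,j)<(2,j+1)>(1,j+1)\dots$, and close up using the top element $(2,m+1)$ and the bottom element $(1,1)$.
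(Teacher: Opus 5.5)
\textbf{Where the proof is sound.} Part (i) is correct and follows the paper's case split. So is the forward direction of (ii). Your observation that in a $4$-cycle every minimum is adjacent to every maximum correctly justifies that $B_2$ has no AHC, which the paper only asserts. The converse of (ii) is also correct whenever some exponent is odd and at least $3$, or at least two exponents equal $1$. The false starts on $B_2$ in part (i) should simply be deleted, since $\{1\}>\varnothing<\{1,2\}>\{2\}$ works.

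\textbf{The gap.} It is the subcase you flag yourself: $D_n\cong C_1\times C_{2k_1}\times\cdots\times C_{2k_s}$ with $s\ge1$ and all $k_i\ge1$. Your argument there rests on an AHC in $C_1\times C_{2k}$ for every $k\ge1$, which you announce but never construct. None of the results you invoke produces it:
\begin{itemize}
\item Theorem~\ref{thm:intro-boolean}(i) needs two factors $C_1$.
\item Theorem~\ref{thm:intro-chains}(ii) needs an odd chain $C_m$ with $m\ge3$.
\item Theorem~\ref{thm:intro-chains}(iii) needs a factor that already has an AHC, but $C_1$ and $C_{2k}$ have $2$ and $2k+1$ elements, so neither does.
\end{itemize}
Your zigzag sketch does not fill the hole either. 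The run $(1,1)<(2,2)>(1,2)<\cdots<(2,m+1)>(1,m+1)$ misses $(2,1)$ and cannot be closed, because $(2,m+1)$ is the only element strictly above $(1,m+1)$. More structurally, in any AHC of $C_1\times C_m$, the element $(1,m+1)$ must be a local maximum with both neighbours in the bottom row. Likewise $(2,1)$ must be a local minimum with both neighbours in the top row. So a row-alternating zigzag has the wrong shape.

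\textbf{The missing ingredient.} The paper uses a base case plus a length-two chain extension. In $C_1\times C_2$ there is the explicit AHC
\[
(1,1)<(1,3)>(1,2)<(2,2)>(2,1)<(2,3)>(1,1).
\]
Lemma~\ref{lem:chain-extension} turns an AHC of $P\times C_m$ into one of $P\times C_{m+2}$:
\begin{itemize}
\item every chain coordinate $j\ge2$ is shifted to $j+2$;
\item each $(x,1)$ is replaced by $(x,1)<(x,3)>(x,2)$ if it is a local minimum, and by $(x,3)>(x,1)<(x,2)$ if it is a local maximum.
\end{itemize}
With $P=C_1$ this gives an AHC in $C_1\times C_{2k}$ for all $k\ge1$; this is the case $r=1$ of Lemma~\ref{lem:b1-chain}. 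After that, your appeal to Theorem~\ref{thm:intro-chains}(iii) for the remaining even factors completes the proof.
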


AHPs and AHCs are closely related to snake-free families. 
For finite posets $R$ and $P$, a \emph{weak embedding} of $R$ into $P$ is an injective map $f:R\to P$ such that $r<r'$ implies $f(r)<f(r')$.
We write $R\hookrightarrow P$ when such a map exists.
We say that a poset $P$ is $R$-free if there is no weak embedding of $R$ into $P$.
For results on subposet-free problems in Boolean lattices, see
\cite{Bukh,GriggsLu,KatonaXiao,ZhanLuZhao}.

For $k\ge2$, the \emph{$k$-element snake} $S_k$ is the poset on $\{x_1,\ldots,x_k\}$, whose only strict order relations are
\begin{equation}\label{def:snake}
x_1<x_2>x_3<x_4>\cdots.
\end{equation}
We take $S_1$ to be a singleton.
For results concerning snake-free families, see
\cite{Bukh,GriggsKatona,GriggsLu,Katona,KatonaTarjan,Sperner}.
Clearly, if $P$ admits an AHP and $P$ is self-dual (see Section~\ref{sec:boolean} for the definition),
then $S_k\hookrightarrow P$ for every $1\le k\le |P|$.
Katona~\cite[Section~8]{Katona} suggested that $S_k\hookrightarrow B_n$ whenever $k\le2^n$.
Since $B_n$ admits an AHP by Theorem~\ref{thm:intro-products} and $B_n$ is self-dual,
we have $S_k\hookrightarrow B_n$ for every $1\le k\le2^n$,
confirming Katona's statement.

The paper is organized as follows.
In Section~\ref{sec:boolean}, we give a constructive proof of Theorem~\ref{thm:intro-boolean}.
In Section~\ref{sec:chains}, we introduce a chain construction that preserves AHPs and AHCs and use it to prove Theorem~\ref{thm:intro-chains}.
In Section~\ref{sec:products}, we combine these results to prove Theorem~\ref{thm:intro-products}, and determine when a weak embedding of a snake into a divisor lattice exists.
Finally, in Section~\ref{sec:applications}, we construct snake-free families in divisor lattices and derive lower bounds for the maximum size of such families.

\section{Products with Boolean lattices}
\label{sec:boolean}
In this section, we prove Theorem~\ref{thm:intro-boolean}.
We first recall some basic properties of AHPs and AHCs.
Throughout the paper, all posets considered are finite and nonempty.
Consecutive elements in an AHP or AHC are comparable, but need not be related by a cover relation.

We call $R$ a \emph{refinement} of $P$ if they have the same underlying set and $x\le_P y$ implies $x\le_R y$ for all $x,y\in R$.
The \emph{dual} $P^*$ has the same underlying set as $P$, with $x\le_{P^*}y$ if and only if $y\le_P x$.
We call $P$ \emph{self-dual} if $P\cong P^*$.
The following results are immediate from the definitions.
\begin{lemma}
\label{lem:basic-properties}
Let $P$ be a finite poset.
\begin{enumerate}[label=(\roman*), font=\upshape]
\item If $P$ admits an AHC, then it admits an AHP.
\item If $P$ admits an AHC, then $|P|$ is even and $|P|\ge4$.
\item $P$ admits an AHP (resp., AHC) if and only if $P^*$ admits an AHP (resp., AHC).
\item If $P$ admits an AHP (resp., AHC), then so does every refinement of $P$.
\end{enumerate}
\end{lemma}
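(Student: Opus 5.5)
The four parts follow directly from the definitions, and I will check them in the order stated.

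For (i), take an AHC $x_1<x_2>\cdots<x_{|P|}>x_1$ (or its reverse-pattern version). Reading it as a linear sequence $x_1,\ldots,x_{|P|}$ and dropping the closing relation between $x_{|P|}$ and $x_1$ gives an alternating ordering of all elements. That ordering is an AHP.

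For (ii), the cyclic pattern forces the relations to alternate around the cycle. Say $x_i<x_{i+1}$ for odd $i$ and $x_i>x_{i+1}$ for even $i$, with indices taken modulo $|P|$. The closing relation $x_{|P|}>x_1$ must then be of the "$>$" type, so it must occupy an even position, which forces $|P|$ to be even. Without this, going once around the cycle would produce two consecutive relations of the same type. The bound $|P|\ge4$ is part of the definition of an AHC.

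For (iii), any ordering that alternates in $P$ also alternates in $P^*$: every $<$ becomes $>$ and vice versa, so the pattern switches between the two allowed forms. Both AHPs and AHCs are therefore preserved, and since $(P^*)^*=P$ the converse holds as well.

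For (iv), a refinement $R$ has the same underlying set and keeps every strict relation of $P$: if $x<_P y$ then $x\le_R y$, and $x\ne y$, so $x<_R y$. The same ordering is therefore an AHP (resp.\ AHC) in $R$.

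I do not expect any obstacle. The only point needing a moment's care is the parity argument in (ii), which just requires that the relation types strictly alternate all the way around the cycle.
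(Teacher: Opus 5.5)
Your proposal is correct and matches the paper, which gives no written proof and only says the four parts are immediate from the definitions. Your checks are exactly the routine verifications being left to the reader: dropping the closing relation for (i), the cyclic alternation forcing an even number of relations for (ii), swapping every $<$ and $>$ under duality for (iii), and $x<_P y\Rightarrow x<_R y$ on the common ground set for (iv).
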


We now prove that if $|P|\ge2$ and $P$ admits an AHP, then $P\times B_2$ and $P\times B_3$ admit an AHC.

\begin{lemma}
\label{lem:b2}
Let $P$ be a finite poset with $|P|\ge 2$.
If $P$ admits an AHP, then $P\times B_2$ admits an AHC.
\end{lemma}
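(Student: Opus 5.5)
The plan is to build the cycle explicitly from a fixed AHP of $P$, using the four ``layers'' $P\times\{q\}$ for $q\in B_2$. Write $B_2=\{0,a,b,1\}$ with $0<a,b<1$ and $a,b$ incomparable. By Lemma~\ref{lem:basic-properties}(iii) I may assume the AHP is $x_1<x_2>x_3<\cdots x_n$ with $n=|P|\ge2$. Call the $x_i$ with odd $i$ \emph{valleys} and those with even $i$ \emph{peaks}. Each layer $P\times\{q\}$ is isomorphic to $P$, so the sequence $(x_1,q),(x_2,q),\dots,(x_n,q)$ is an alternating path. It remains alternating when read backwards, since only the roles of the two endpoints change.

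The first guiding observation is that these four layer paths are essentially forced. Suppose we split the copies into four passes along the AHP, each pass using one copy of every $x_i$, with valley copies below the adjacent peak copies. Then the assignment of second coordinates from valleys to peaks is a bijection $\pi$ of $B_2$ with $q\le\pi(q)$, and such a bijection must be the identity. So the cycle will be assembled as the four layer paths traversed alternately forward and backward, for instance in the order layer $0$ forward, layer $a$ backward, layer $1$ forward, layer $b$ backward. The transitions between layers happen at the ends $x_1$ and $x_n$, where I use the ``vertical'' comparabilities $(x,q)<(x,q')$ for $q<q'$. More generally I use $(x,q)<(y,q')$ whenever $x\le y$ and $q\le q'$. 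The four junctions visit the pairs $\{0,a\}$, $\{a,1\}$, $\{1,b\}$, $\{b,0\}$, which are all comparable pairs in $B_2$; this is exactly why $B_2$, and not a chain, is the right second factor.

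The main obstacle is the parity at each junction. At the $x_1$ end, both layer paths arrive at or leave from a valley. So the naive sequence $\ldots>(x_1,q)<(x_1,q')<\cdots$ breaks alternation. At the $x_n$ end, the problem depends on whether $n$ is even or odd. I would fix this by locally re-routing the first two and the last two elements of each layer. For example, near $x_1$ I would replace the segment $(x_2,q)>(x_1,q)$, $(x_1,q')<(x_2,q')$ by a zig-zag such as $(x_2,q)>(x_1,q)<(x_1,q')$ followed by $(x_1,q')>(x_1,q'')$-type steps that borrow an element from a third layer. Alternatively, I would cross-connect using $(x_1,0)<(x_2,1)$ and $(x_2,a)>(x_1,0)$, and exchange which layer's copy of $x_1$ or $x_2$ is used where. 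I would work this out first for the smallest case $n=2$, where $P\times B_2$ is a refinement of $C_1\times B_2$, in order to find a junction gadget. Then I would check that the same gadget works at both ends for arbitrary $n$, treating $n$ even and $n$ odd separately at the $x_n$ end. If the peak at $x_n$ is awkward for odd $n$, I would instead attach $x_n$ as a valley-to-peak step into the next layer using $(x_n,0)<(x_n,a)$.

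Finally, I would verify that the resulting cyclic sequence uses all $4n$ elements exactly once and alternates throughout, including the closing edge. Every non-junction edge is a layer edge, hence valid. Every junction edge is comparable by the product order. The count $4n$ is even and at least $8$, consistent with Lemma~\ref{lem:basic-properties}(ii).
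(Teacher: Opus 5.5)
Your architecture matches the paper's: four copies of the AHP indexed by $B_2$, traversed alternately forwards and backwards along a $4$-cycle of the comparability graph of $B_2$, with all the difficulty pushed to the two ends. But the proposal stops exactly where the proof begins. The whole content of the lemma is the junction gadgets, and you only list tentative options (``I would work this out'', ``alternatively'') without fixing one. Your closing check that junction edges are ``comparable by the product order'' also misses the point you yourself identified: the \emph{direction} of each comparison must alternate.

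What is needed is concrete. Shorten the layer paths to $I_q=(x_2,q),\dots,(x_{n-1},q)$, and spend all four vertices of $\{x_1\}\times B_2$ on two joining paths
\[
(x_2,0)>(x_1,0)<(x_2,a),\qquad (x_2,b)>(x_1,b)<(x_1,1)>(x_1,a)<(x_2,1).
\]
Your first zig-zag with $q=b$, $q'=1$, $q''=a$ is exactly the second path. It only works together with the first, which absorbs the leftover $(x_1,0)$.

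At the $x_n$ end you need a \emph{different} pairing of the layers, or the pieces close into two disjoint cycles.
\begin{itemize}
\item For odd $n$, use the same two paths with $(x_1,x_2)$ replaced by $(x_n,x_{n-1})$ and $a,b$ interchanged. This joins $I_0$ to $I_b$ and $I_a$ to $I_1$.
\item For even $n$, $x_{n-1}$ is a valley and $x_n$ a peak. Use the dual-shaped paths $(x_{n-1},1)<(x_n,1)>(x_{n-1},a)$ and $(x_{n-1},b)<(x_n,b)>(x_n,0)<(x_n,a)>(x_{n-1},0)$.
\end{itemize}
All of this needs $n\ge3$. For $n=2$ there are no interior vertices and the two ends collide, so that case must be settled separately, e.g.\ by an explicit AHC of $P\times B_2\cong B_3$.

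Three smaller slips show the plan was not yet checked.
\begin{itemize}
\item Under your convention $x_n$ is a valley when $n$ is odd, so the awkward peak end occurs for even $n$, not odd $n$.
\item Comparability of the junction pairs is not what makes $B_2$ work. $P\times C_3$ refines $P\times B_2$, so a chain is at least as good. Conversely, all pairs in $B_1$ are comparable, yet $B_1\times B_1\cong B_2$ has no AHC (Remark~\ref{rem:boolean-bound}). What the gadget actually uses is that $1$ lies above both $a$ and $b$, so $(x_1,1)$ can be the extra peak in $(x_1,b)<(x_1,1)>(x_1,a)$.
\item Your reduction to an AHP starting with $<$ applies Lemma~\ref{lem:basic-properties}(iii) to $P\times B_2$, and it also needs $B_2^*\cong B_2$.
\end{itemize}
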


\begin{proof}
If $|P|=2$, then $P\cong B_1$ is a two-element chain and
$P\times B_2\cong B_3$, which admits the AHC
\[
\varnothing<\{1,2,3\}>\{1\}<\{1,2\}>\{2\}
<\{2,3\}>\{3\}<\{1,3\}>\varnothing.
\]
We now prove the case $|P|\ge3$. 
First, we suppose that the AHP in $P$ is of the form
\begin{equation}\label{x1-xp}
x_1<x_2>x_3<x_4>\cdots.
\end{equation}

\smallskip
\noindent\textbf{Step 1. Construct four disjoint alternating paths.}
Let $(x_2,x_3,\ldots,x_{|P|-1})$ be an alternating path of $P$ defined in~\eqref{x1-xp}. 
For each $A\in \{\varnothing, \{1\},\{2\}, \{1,2\}\}$, let
\[
I_A=(x_2,A),(x_3,A),\ldots,(x_{|P|-1},A).
\]
Clearly, $I_A$ is an alternating path in $P\times B_2$ from $(x_2,A)$ to $(x_{|P|-1},A)$.
We denote by $\overleftarrow{I_A}$ the path obtained by traversing $I_A$ in reverse order, 
$$
\overleftarrow{I_A}=(x_{|P|-1},A), (x_{|P|-2},A),\ldots,(x_{2},A).
$$
These four paths $I_{\varnothing}, I_{\{1\}},I_{\{2\}}, I_{\{1,2\}}$ are disjoint and traverse
$\{x_2,\ldots,x_{|P|-1}\}\times B_2$.
When $|P|=3$, each $I_A$ consists of a single vertex.

\smallskip
\noindent\textbf{Step 2. The case that $|P|$ is odd.}
Then~\eqref{x1-xp} becomes
$$
x_1<x_2>x_3<x_4>\cdots <x_{|P|-1}>x_{|P|}.
$$
Thus, we construct two joining paths whose endpoints lie in
$\{x_2\}\times B_2$ and whose internal vertices lie in
$\{x_1\}\times B_2$:
\begin{align}
(x_2,\varnothing)
>&(x_1,\varnothing)<(x_2,\{1\}),\label{1-x_1-x_2}\\
(x_2,\{2\})
>(x_1,\{2\})<&(x_1,\{1,2\})
>(x_1,\{1\})<(x_2,\{1,2\}).\label{2-x_1-x_2}
\end{align}
Similarly, we construct two joining paths whose endpoints lie in
$\{x_{|P|-1}\}\times B_2$ and whose internal vertices lie in
$\{x_{|P|}\}\times B_2$:
\begin{align*}
(x_{|P|-1},\varnothing)
>&(x_{|P|},\varnothing)<(x_{|P|-1},\{2\}),\\
(x_{|P|-1},\{1\})
>(x_{|P|},\{1\})<&(x_{|P|},\{1,2\})
>(x_{|P|},\{2\})<(x_{|P|-1},\{1,2\}).
\end{align*}
The four paths visit each vertex $(x_1,A)$ and $(x_{|P|},A)$ 
exactly once. 
See Figure~\ref{fig:b2-odd-joining}.

\begin{figure}[htbp]
\centering
\begin{tikzpicture}[
  scale=0.55,
  x=1cm,y=1cm,
  every node/.style={font=\footnotesize},
  every path/.style={line width=0.8pt},
  path label/.style={fill=white,inner sep=3pt},
  joining label/.style={fill=white,inner sep=2pt,align=left},
  traversal/.style args={#1}{
    postaction={decorate},
    decoration={markings,
      mark=at position #1 with {
        \arrow[red!80!black]{Stealth[length=5pt,width=4pt]}
      }
    }
  }
]
  \coordinate (L0) at (0,0);
  \coordinate (L1) at (0,-1.8);
  \coordinate (L2) at (0,-3.6);
  \coordinate (L3) at (0,-5.4);
  \coordinate (R0) at (3.8,0);
  \coordinate (R1) at (3.8,-1.8);
  \coordinate (R2) at (3.8,-3.6);
  \coordinate (R3) at (3.8,-5.4);

  \draw (L0)--(R0)
    node[midway,path label]
    {$I_\varnothing$};
  \draw[traversal=0.97] (R1)--(L1)
    node[midway,path label]
    {$I_{\{1\}}$};
  \draw[traversal=0.94] (R2)--(L2)
    node[midway,path label]
    {$I_{\{2\}}$};
  \draw (L3)--(R3)
    node[midway,path label]
    {$I_{\{1,2\}}$};

  \draw[traversal=0.75]
    (L1) .. controls (-1.5,-1.8) and (-1.5,0) .. (L0)
    node[pos=0.5,left=3pt,joining label]
    {$(x_1,\varnothing)$};
  \draw[traversal=0.75]
    (L2) .. controls (-1.5,-3.6) and (-1.5,-5.4) .. (L3)
    node[pos=0.5,left=3pt,joining label]
    {$(x_1,\{2\})$
     $<(x_1,\{1,2\})$
     $>(x_1,\{1\})$};

  \draw[traversal=0.18]
    (R0) .. controls (6.3,0) and (6.3,-3.6) .. (R2)
    node[pos=0.5,right=3pt,joining label]
    {$(x_{|P|},\varnothing)$};
  \draw[preaction={draw=white,line width=4pt},traversal=0.18]
    (R3) .. controls (6.1,-5.4) and (6.1,-1.8) .. (R1)
    node[pos=0.5,right=1pt,joining label]
    {$(x_{|P|},\{2\})$
     $<(x_{|P|},\{1,2\})$
     $>(x_{|P|},\{1\})$};

  \foreach \i in {0,1,2,3}{
    \fill (L\i) circle (1.5pt);
    \fill (R\i) circle (1.5pt);
  }
  \node[above=5pt] at (L0) {$x_2$};
  \node[above=5pt] at (R0) {$x_{|P|-1}$};
\end{tikzpicture}
\caption{The connections for odd $|P|$.}
\label{fig:b2-odd-joining}
\end{figure}

\smallskip
\noindent\textbf{Step 3.  The case that $|P|$ is even.}
Then~\eqref{x1-xp} becomes
$$
x_1<x_2>x_3<x_4>\cdots >x_{|P|-1}<x_{|P|}.
$$ 
There are two alternating paths~\eqref{1-x_1-x_2} and~\eqref{2-x_1-x_2} in $\{x_1,x_2\}\times B_2$, the same as those when $|P|$ is odd.
For $(x_{|P|-1},A)$ and $(x_{|P|},A)$,
there are two alternating paths
\begin{align*}
(x_{|P|-1},\{1,2\})
<&(x_{|P|},\{1,2\})>(x_{|P|-1},\{1\}),\\
(x_{|P|-1},\{2\})
<(x_{|P|},\{2\})>&(x_{|P|},\varnothing)
<(x_{|P|},\{1\})>(x_{|P|-1},\varnothing).
\end{align*}
The four paths visit each vertex $(x_1,A)$ and $(x_{|P|},A)$ 
exactly once.

\smallskip
\noindent\textbf{Step 4. This construction forms an AHC.}
The four paths $I_{\varnothing}, I_{\{1\}},I_{\{2\}}, I_{\{1,2\}}$ visit the vertices 
$(x_2,A),\ldots,(x_{|P|-1},A)$ for each $A\in \{\varnothing, \{1\},\{2\}, \{1,2\}\}$. 
The paths in Step 2 or Step 3 visit the vertices  $(x_1,A)$ and $(x_{|P|},A)$. 
Suppose that $|P|$ is odd.
Then
\begin{align*}
(x_1,\varnothing)&\rightarrow I_{\varnothing} \rightarrow (x_{|P|},\varnothing) \rightarrow \overleftarrow{I_{\{2\}}}
\rightarrow (x_1,\{2\})
<(x_1,\{1,2\})>(x_1,\{1\})\\
&\rightarrow I_{\{1,2\}}\rightarrow (x_{|P|},\{2\})<(x_{|P|},\{1,2\})>(x_{|P|},\{1\})\rightarrow \overleftarrow{I_{\{1\}}}
\rightarrow (x_1,\varnothing)
\end{align*}
forms an AHC in $P\times B_2$.
Similarly, we can obtain an AHC in $P\times B_2$ when $|P|$ is even. 

Suppose that the AHP in $P$ is of the form
\begin{equation}\label{x22-xp}
x_1>x_2<x_3>x_4<\cdots.
\end{equation}
Then the dual $P^*$ admits an AHP of the form~\eqref{x1-xp}.
Thus $P^*\times B_2$ admits an AHC.
By Lemma~\ref{lem:basic-properties} (iii),
\[
(P^\ast\times B_2)^\ast
\cong P\times B_2^\ast
\cong P\times B_2
\]
admits an AHC.
This completes the proof.
\end{proof}

We next consider the case $P\times B_3$.

\begin{lemma}
\label{lem:b3}
Let $P$ be a finite poset with $|P|\ge 2$.
If $P$ admits an AHP, then $P\times B_3$ admits an AHC.
\end{lemma}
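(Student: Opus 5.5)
We follow the template of the proof of Lemma~\ref{lem:b2}. By Lemma~\ref{lem:basic-properties}~(iii) and the self-duality of $B_3$, we may assume the AHP of $P$ has the form $x_1<x_2>x_3<\cdots$. We then split $B_3$ into eight ``layers''. For each $A\in B_3$ let
\[
I_A=(x_2,A),(x_3,A),\ldots,(x_{|P|-1},A),
\]
an alternating path. The eight paths $I_A$ are disjoint and cover $\{x_2,\ldots,x_{|P|-1}\}\times B_3$. For $|P|=2$ there is nothing in the middle, so that case is treated separately: $P\cong B_1$ and $P\times B_3\cong B_4$, and we exhibit an explicit AHC of $B_4$ on its $16$ elements. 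It may be easier to build this from the one for $B_3$ in Lemma~\ref{lem:b2} by doubling; an explicit list is acceptable.

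For $|P|\ge3$ we must supply joining paths on each end. On the left these are four paths whose endpoints lie in $\{x_2\}\times B_3$ and whose internal vertices exhaust $\{x_1\}\times B_3$. Every edge from $(x_2,\cdot)$ into $(x_1,\cdot)$ is a descent, and $(x_1,A)<(x_2,A')$ requires $A\subseteq A'$. On the right we need four analogous paths through $\{x_{|P|}\}\times B_3$. Here the direction of the edges depends on the parity of $|P|$: for odd $|P|$, $x_{|P|-1}>x_{|P|}$, the same shape as the left; for even $|P|$, $x_{|P|-1}<x_{|P|}$, the dual shape. Each joining path has the form
\[
(x_2,A)>(x_1,\cdot)<\cdots>(x_1,\cdot)<(x_2,A'),
\]
and it uses an odd number of $x_1$-vertices. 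Four odd numbers must sum to $8$, so we use sizes such as $1,1,3,3$ or $1,1,1,5$. Natural choices are the following. Use $\varnothing\to\{1\}$ through $(x_1,\varnothing)$, and $\{2\}\to\{1,2\}$ through $(x_1,\{2\})$. For the rest, take a zigzag inside $\{x_1\}\times B_3$ on the remaining elements $\{1\},\{3\},\{1,2\},\{1,3\},\{2,3\},\{1,2,3\}$, split into the two joining paths of lengths $3$ and $3$ as appropriate. Each such path must start just below its left endpoint set and end just below its right endpoint set.

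The key combinatorial step is to choose the left pairing $\sigma_L$ and the right pairing $\sigma_R$ of the eight labels so that they are realisable and so that gluing gives a single cycle. Concretely, we form the multigraph on the eight labels $A$ whose edges are the left joins and the right joins. Each $I_A$ connects the left end of $A$ to the right end of $A$, so the whole union is one cycle exactly when this pairing graph, with the $I_A$ identifying each label's two ends, is one $8$-cycle. Alternation is then automatic: along $I_A$ the pattern is inherited from $P$, and at every junction the joining paths start with $>$ and end with $<$ (or the dual), matching the ends of $I_A$ because $x_1<x_2$ and $x_{|P|-1}\gtrless x_{|P|}$.

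The main obstacle is to find explicit joining families in both parity cases such that the combined pairing is a single Hamiltonian cycle on the eight labels. I would first try an ansatz mirroring Lemma~\ref{lem:b2}: take the left joins as in that lemma applied to $B_3$ ordered by a fixed Gray-like listing. For the right side, use the same joins composed with a cyclic shift of coordinates, for instance $1\to2\to3\to1$, which breaks symmetry and tends to merge cycles. I would then check the cycle condition by hand and adjust a single pair if two cycles result. The even-$|P|$ case uses the dual of the right-end gadget, obtained by complementation $A\mapsto[3]\setminus A$, which is an anti-automorphism of $B_3$. Once one explicit configuration works for each parity, the proof is complete.
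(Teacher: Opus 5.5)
Your framework is the paper's: the layers $I_A$, joining paths through $\{x_1\}\times B_3$ and $\{x_{|P|}\}\times B_3$ with an odd number of internal vertices, a pairing graph that must be a single $8$-cycle, and complementation for even $|P|$. But the proposal stops where the content of the lemma begins. You never write down the right-hand joining paths, never verify the cycle condition in either parity, and never produce the AHC for $|P|=2$.

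Worse, the ansatz you sketch fails. Your ``natural'' left choices join $\varnothing$ to $\{1\}$ via $(x_1,\varnothing)$ and $\{2\}$ to $\{1,2\}$ via $(x_1,\{2\})$. Among the six leftover vertices, only $\{1,3\}$ and $\{1,2,3\}$ have two smaller leftover elements, so they must be the two peaks. This forces
$(x_2,\{3\})>(x_1,\{3\})<(x_1,\{1,3\})>(x_1,\{1\})<(x_2,\{1,3\})$ and $(x_2,\{1,2,3\})>(x_1,\{1,2\})<(x_1,\{1,2,3\})>(x_1,\{2,3\})<(x_2,\{2,3\})$.

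So your left pairing is $A\leftrightarrow A\triangle\{1\}$, and any coordinate permutation of it is $A\leftrightarrow A\triangle\{i\}$. The union of two such pairings is a set of $2$-cycles if $i=1$, and two $4$-cycles separated by the third coordinate if $i\ne1$. For your shift $1\to2\to3\to1$ the two cycles are $\varnothing,\{1\},\{1,2\},\{2\}$ and $\{3\},\{1,3\},\{1,2,3\},\{2,3\}$. Complementation fixes such flip pairings, so the even case fails the same way. ``Adjusting a single pair'' is not possible either: changing a perfect matching alters at least two pairs, and realizability must then be rechecked.

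A working configuration, which is exactly the paper's, is as follows.
\begin{itemize}
\item Right side: keep your shifted gadget. It is precisely the paper's right side, with pairing $A\leftrightarrow A\triangle\{2\}$.
\item Left side: keep $\varnothing$ with $\{1\}$ and $\{3\}$ with $\{1,3\}$ as above. Join $\{2\}$ to $\{2,3\}$ via $(x_1,\{2\})$, and $\{1,2\}$ to $\{1,2,3\}$ via $(x_1,\{1,2\})<(x_1,\{1,2,3\})>(x_1,\{2,3\})$.
\end{itemize}
The pairing graph is then the single cycle $\varnothing,\{2\},\{2,3\},\{3\},\{1,3\},\{1,2,3\},\{1,2\},\{1\}$. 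Since complementation preserves the right pairing, the complemented right gadget closes the same cycle when $|P|$ is even.

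For $|P|=2$ you need neither an explicit list nor ``doubling''. There is no general way to pass an AHC from $Q$ to $Q\times B_1$; see Remark~\ref{rem:chain-sharpness}. Instead, $B_4\cong B_2\times B_2$. Lemma~\ref{lem:b2} applied to $P=B_2$, which has the AHP $\{1\}<\{1,2\}>\varnothing<\{2\}$, gives the AHC directly.
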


\begin{proof}
If $|P|=2$, then $P\cong B_1$ is a two-element chain and $P\times B_3\cong B_4\cong B_2\times B_2$.
Since $B_2$ admits an AHP, $B_4$ admits an AHC by Lemma~\ref{lem:b2}.

Suppose that $|P|\ge3$. 
As in the proof of Lemma~\ref{lem:b2}, we may assume that the AHP in $P$ is of the form
\[
x_1<x_2>x_3<x_4>\cdots.
\]

\smallskip
\noindent\textbf{Step 1. Construct eight disjoint alternating paths.}
For each 
$$A\in\{\varnothing, \{1\},\{2\}, \{3\}, \{1,2\}, \{1,3\},\{2,3\},\{1,2,3\}\},$$ let
\[
I_A=(x_2,A),(x_3,A),\ldots,(x_{|P|-1},A).
\]
These eight paths are disjoint and traverse $\{x_2,\ldots,x_{|P|-1}\}\times B_3$.

\smallskip
\noindent\textbf{Step 2. The case that $|P|$ is odd.}
We construct four joining paths whose endpoints lie in
$\{x_2\}\times B_3$ and whose internal vertices lie in
$\{x_1\}\times B_3$:
\begin{align}
(x_2,\varnothing)>&(x_1,\varnothing)<(x_2,\{1\}),\label{3-x_1-x_2}\\
(x_2,\{3\})>(x_1,\{3\})<&(x_1,\{1,3\})>(x_1,\{1\})<(x_2,\{1,3\}), \label{4-x_1-x_2}\\
(x_2,\{2\})>&(x_1,\{2\})<(x_2,\{2,3\}),\label{5-x_1-x_2}\\
(x_2,\{1,2\})>(x_1,\{1,2\})<&(x_1,\{1,2,3\})>(x_1,\{2,3\})<(x_2,\{1,2,3\}).\label{6-x_1-x_2}
\end{align}
Similarly, we construct four joining paths whose endpoints lie in
$\{x_{|P|-1}\}\times B_3$ and whose internal vertices lie in
$\{x_{|P|}\}\times B_3$:
\begin{align*}
(x_{|P|-1},\varnothing)
>&(x_{|P|},\varnothing)<(x_{|P|-1},\{2\}),\\
(x_{|P|-1},\{1\})
>(x_{|P|},\{1\})<&(x_{|P|},\{1,2\})
 >(x_{|P|},\{2\})<(x_{|P|-1},\{1,2\}),\\
(x_{|P|-1},\{3\})
>&(x_{|P|},\{3\})<(x_{|P|-1},\{2,3\}),\\
(x_{|P|-1},\{1,3\})
>(x_{|P|},\{1,3\})<&(x_{|P|},\{1,2,3\})
 >(x_{|P|},\{2,3\})<(x_{|P|-1},\{1,2,3\}).
\end{align*}
These eight joining paths visit every vertex $(x_1,A)$ and $(x_{|P|},A)$ exactly once as an internal vertex.

\smallskip
\noindent\textbf{Step 3. The case that $|P|$ is even.}
There are four alternating paths~\eqref{3-x_1-x_2},~\eqref{4-x_1-x_2},~\eqref{5-x_1-x_2} and~\eqref{6-x_1-x_2} in $\{x_1,x_2\}\times B_3$, the same as those when $|P|$ is odd.
For $(x_{|P|-1},A)$ and $(x_{|P|},A)$,
there are four alternating paths
\begin{align*}
(x_{|P|-1},\{1,2,3\})
<&(x_{|P|},\{1,2,3\})>(x_{|P|-1},\{1,3\}),\\
(x_{|P|-1},\{2,3\})
<(x_{|P|},\{2,3\})>&(x_{|P|},\{3\})
<(x_{|P|},\{1,3\})>(x_{|P|-1},\{3\}),\\
(x_{|P|-1},\{1,2\})
<&(x_{|P|},\{1,2\})>(x_{|P|-1},\{1\}),\\
(x_{|P|-1},\{2\})
<(x_{|P|},\{2\})>&(x_{|P|},\varnothing)
<(x_{|P|},\{1\})>(x_{|P|-1},\varnothing).
\end{align*}

\smallskip
\noindent\textbf{Step 4. This construction forms an AHC.}
The eight paths and the joining paths visit all vertices of $P\times B_3$.
When $|P|$ is odd, 
\begin{align*}
(x_1,\varnothing)
&\rightarrow I_\varnothing
 \rightarrow(x_{|P|},\varnothing)
 \rightarrow\overleftarrow{I_{\{2\}}}
 \rightarrow(x_1,\{2\})
 \rightarrow I_{\{2,3\}}\\
&\rightarrow(x_{|P|},\{3\})
 \rightarrow\overleftarrow{I_{\{3\}}}
 \rightarrow(x_1,\{3\})
 <(x_1,\{1,3\})>(x_1,\{1\})
 \rightarrow I_{\{1,3\}}\\
&\rightarrow(x_{|P|},\{1,3\})
 <(x_{|P|},\{1,2,3\})>(x_{|P|},\{2,3\})
 \rightarrow\overleftarrow{I_{\{1,2,3\}}}\\
&\rightarrow(x_1,\{2,3\})
 <(x_1,\{1,2,3\})>(x_1,\{1,2\})
 \rightarrow I_{\{1,2\}}\\
&\rightarrow(x_{|P|},\{2\})
 <(x_{|P|},\{1,2\})>(x_{|P|},\{1\})
 \rightarrow\overleftarrow{I_{\{1\}}}
 \rightarrow(x_1,\varnothing)
\end{align*}
forms an AHC.
Similarly, the connections in Step 3 give an AHC
when $|P|$ is even.
\end{proof}




Note that if a finite poset $P$ admits an AHP, then $P\times B_1$ does not necessarily admit an AHC.
We present the following result.

\begin{lemma}
\label{lem:b1}
Let $P$ be a finite poset.
If $P$ admits an AHC, then $P\times B_1$ admits an AHP.
\end{lemma}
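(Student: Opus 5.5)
The plan is to build the path directly by running twice around the given AHC, once in each of the two copies of $P$ inside $P\times B_1$, with a single joining edge in between. Write $B_1=\{\varnothing,\{1\}\}$ and abbreviate the two copies as $P\times\{0\}$ and $P\times\{1\}$. By Lemma~\ref{lem:basic-properties} (ii), $|P|=2k$ with $k\ge2$. After a cyclic relabeling, and if necessary passing to $P^*$ via Lemma~\ref{lem:basic-properties} (iii) (note $B_1^*\cong B_1$), we may assume the AHC has the form
\[
x_1<x_2>x_3<\cdots<x_{2k}>x_1 .
\]

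\textbf{Step 1 (the open cycle).} Delete the edge $x_1<x_2$ from the AHC. What remains is an alternating path
\[
x_2>x_3<x_4>\cdots<x_{2k}>x_1 .
\]
It has $2k$ vertices and $2k-1$ (an odd number of) steps. So its relations alternate $>,<,\ldots,>$: it both begins and ends with a $>$.

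\textbf{Step 2 (two copies and the joining edge).} Place this path in $P\times\{0\}$, and then place it again in $P\times\{1\}$:
\[
(x_2,0)>(x_3,0)<\cdots>(x_1,0)\;<\;(x_2,1)>(x_3,1)<\cdots<(x_{2k},1)>(x_1,1).
\]
Within each copy the relations are exactly those of $P$, because the second coordinate is constant. The joining relation $(x_1,0)<(x_2,1)$ holds since $x_1<x_2$ in $P$ and $0<1$ in $B_1$. This is precisely the edge deleted in Step 1, now reused diagonally between the two layers.

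\textbf{Step 3 (verification).} The first copy ends with $>$, the joining step is $<$, and the second copy begins with $>$. Hence the relations alternate along the whole sequence. Every element of $P\times B_1$ appears exactly once, so the sequence is an AHP.

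The only point needing care is the parity bookkeeping in Step 1. Each opened copy must start and end with the same relation, so that a single $<$ edge can join them. This is guaranteed because $|P|$ is even, which holds by Lemma~\ref{lem:basic-properties} (ii). I do not expect any further obstacle.
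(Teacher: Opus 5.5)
Your proof is correct and is essentially the paper's argument: open the AHC at the edge $x_1<x_2$, traverse the path $x_2,\ldots,x_{|P|},x_1$ in $P\times\{0\}$ and then in $P\times\{1\}$, and join the two copies by $(x_1,0)<(x_2,1)$. The passage to $P^*$ is unnecessary, since a cyclic shift alone puts any AHC into the form $x_1<x_2>\cdots$, but it does no harm.
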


\begin{proof}
By Lemma~\ref{lem:basic-properties} (ii), $|P|$ is even.
Suppose that the AHC in $P$ is of the form
\[
x_1<x_2>x_3<\cdots<x_{|P|}>x_1.
\]

Identify $B_1$ with the chain $0<1$.
For each $\varepsilon\in\{0,1\}$, define the alternating path
\[
J_\varepsilon=
(x_2,\varepsilon),(x_3,\varepsilon),\ldots,
(x_{|P|},\varepsilon),(x_1,\varepsilon).
\]
These two paths $J_0$ and $J_1$ are disjoint and traverse $P\times B_1$.
Since $x_1<x_2$ and $0<1$, we have $(x_1,0)<(x_2,1)$.
Thus $J_0\rightarrow J_1$ is the desired AHP.
\end{proof}

Now we are ready to prove our first theorem.
\begin{proof}[Proof of Theorem~\ref{thm:intro-boolean}]
(i). We proceed by induction on $r$.
By Lemmas~\ref{lem:b2} and~\ref{lem:b3},
we have shown that if $P$ admits an AHP, then $P\times B_r$ admits an AHC for $r=2, 3$.
	Let $r\ge4$ and assume that the result holds for $r-2$.
	Then $P\times B_{r-2}$ admits an AHC.
By Lemma~\ref{lem:basic-properties} (i), $P\times B_{r-2}$ admits an AHP.
	Applying Lemma~\ref{lem:b2} to $P\times B_{r-2}$, we obtain an AHC in
	\[
	(P\times B_{r-2})\times B_2
	\cong P\times B_r.
	\]

(ii). 
Suppose that $P$ admits an AHC. 
Then  by Lemma~\ref{lem:basic-properties} (i), $P$ admits an AHP. 
It is trivial that for $r=0$, the product $P\times B_0\cong P$ admits an AHP.
By Lemma~\ref{lem:b1}, the case $r=1$ holds, that is,
$P\times B_1$ admits an AHP.
For $r\ge2$, since $P$ admits an AHP,
by~(i), $P\times B_r$ admits an AHC, and hence admits an AHP. 
\end{proof}

\begin{remark}\label{rem:boolean-bound}
	The restriction $r\ge2$ in Theorem~\ref{thm:intro-boolean}(i) is sharp.
	For example, $P=B_1$ admits an AHP, but $P\times B_1\cong B_2$ does not admit an AHC.
\end{remark}

\begin{corollary}\label{B-AHP}
For every $n\ge 0$, $B_n$ admits an AHP.
\end{corollary}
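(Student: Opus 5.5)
We prove the statement by a case split on $n$, using Theorem~\ref{thm:intro-boolean}(i) and small explicit orderings; there is no induction beyond what that theorem already contains.

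\textbf{Small cases $n=0,1,2$.} For $n=0$, $B_0$ is a singleton, which admits an AHP by convention. For $n=1$, $B_1$ is the two-element chain $\varnothing<\{1\}$, and this ordering is an AHP. For $n=2$ we write down an explicit ordering:
\[
\{1\}<\{1,2\}>\varnothing<\{2\}.
\]
Each consecutive pair is comparable and the relations alternate, so this is an AHP of $B_2$.

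\textbf{Case $n\ge3$.} Here $B_n\cong B_1\times B_{n-1}$ with $n-1\ge2$. Since $B_1$ has $|B_1|=2\ge2$ and admits an AHP, Theorem~\ref{thm:intro-boolean}(i) shows that $B_1\times B_{n-1}$ admits an AHC. By Lemma~\ref{lem:basic-properties}(i), $B_n$ then admits an AHP. Alternatively, this is exactly Corollary~\ref{coro:katona} combined with Lemma~\ref{lem:basic-properties}(i).

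\textbf{Where the difficulty lies.} There is no real obstacle here. The only point needing attention is the range $n\le2$, where Theorem~\ref{thm:intro-boolean}(i) does not apply. By Remark~\ref{rem:boolean-bound}, $B_2$ has no AHC, so for $n=2$ the AHP has to be given by hand, as above.
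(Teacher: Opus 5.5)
Your proposal is correct and matches the paper's proof essentially verbatim: the same trivial cases $n=0,1$, the same explicit AHP $\{1\}<\{1,2\}>\varnothing<\{2\}$ for $B_2$, and Corollary~\ref{coro:katona} together with Lemma~\ref{lem:basic-properties}(i) for $n\ge3$.
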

\begin{proof}
It is trivial that the singleton $B_0$ and the chain $B_1$ admit AHPs.
Clearly, $B_2$ admits an AHP of the form
$\{1\}<\{1,2\}>\varnothing<\{2\}$.
For $n\ge3$, Corollary~\ref{coro:katona} and Lemma~\ref{lem:basic-properties} (i) give an AHP in $B_n$.
\end{proof}

\section{Products with chains}
\label{sec:chains}
In this section, we prove Theorem~\ref{thm:intro-chains}.
Recall that $C_m$ denotes the chain
\[
1<2<\cdots<m+1.
\]
Every chain $C_m$ admits an AHP.
Indeed, the case $m=0$ is immediate.
For $m\ge1$, alternately taking the smallest and largest
remaining elements, starting with the smallest, gives the AHP
\begin{equation}\label{chain-AHP}
1<m+1>2<m>3<m-1>\cdots.
\end{equation}
For odd $m\ge 3$, this also yields an AHC in $C_m$.

We first prove that if $P$ admits an AHP, then $P\times C_3$ admits an AHC.
Then we prove that if $P\times C_m$ admits an AHP (resp. AHC), then so does $P\times C_{m+2}$.
Finally, we prove Theorem~\ref{thm:intro-chains} by induction.

\begin{lemma}
\label{lem:c3}
Let $P$ be a finite poset.
If $P$ admits an AHP, then $P\times C_3$ admits an AHC.
\end{lemma}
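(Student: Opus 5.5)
The plan is to avoid a new explicit construction and instead reduce to Lemma~\ref{lem:b2}, using Lemma~\ref{lem:basic-properties}~(iv). The key observation is that the four-element chain $C_3$ is a linear extension of $B_2$. The bijection
\[
f:B_2\to C_3,\qquad \varnothing\mapsto 1,\quad \{1\}\mapsto 2,\quad \{2\}\mapsto 3,\quad \{1,2\}\mapsto 4,
\]
is order-preserving: $A\subseteq A'$ implies $f(A)\le f(A')$.

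First I dispose of the degenerate case $|P|=1$. Then $P\times C_3\cong C_3$, and the chain ordering \eqref{chain-AHP} with $m=3$ gives the AHC
\[
1<4>2<3>1,
\]
as already noted after \eqref{chain-AHP}.

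Now assume $|P|\ge2$. Consider the bijection $\mathrm{id}\times f: P\times B_2\to P\times C_3$. Suppose $(p,A)\le(p',A')$ in $P\times B_2$. Then $p\le p'$ and $A\subseteq A'$, hence $f(A)\le f(A')$, and so $(p,f(A))\le(p',f(A'))$ in $P\times C_3$. Transporting along this bijection, $P\times C_3$ is therefore a refinement of $P\times B_2$: same underlying set, with every relation of $P\times B_2$ preserved.

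By Lemma~\ref{lem:b2}, $P\times B_2$ admits an AHC, since $|P|\ge 2$ and $P$ admits an AHP. By Lemma~\ref{lem:basic-properties}~(iv), every refinement of $P\times B_2$ also admits an AHC. Hence $P\times C_3$ admits an AHC.

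There is no real obstacle in this argument. The only points needing care are checking that $f$ is genuinely order-preserving (the incomparable pair $\{1\},\{2\}$ can be sent to either order) and remembering that Lemma~\ref{lem:b2} needs $|P|\ge2$, which is why the singleton case is treated separately.
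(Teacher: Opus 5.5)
Your proposal is correct and is essentially the paper's own proof. The paper also treats the singleton case via the AHC of $C_3$, and for $|P|\ge2$ it identifies $\varnothing,\{1\},\{2\},\{1,2\}$ with $1,2,3,4$. It then views $P\times C_3$ as a refinement of $P\times B_2$ and applies Lemma~\ref{lem:b2} together with Lemma~\ref{lem:basic-properties}~(iv).
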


\begin{proof}
We may assume that $|P|\ge2$, since $C_3$ admits an AHC.
By Lemma~\ref{lem:b2}, $P\times B_2$ admits an AHC.
Identify the elements $\varnothing,\{1\},\{2\},\{1,2\}$ of $B_2$
with $1,2,3,4$, respectively.
Under this identification, $C_3$ is a refinement of $B_2$.
Since $P\times C_3$ is a refinement of $P\times B_2$, Lemma~\ref{lem:basic-properties}~(iv) implies that $P\times C_3$ admits an AHC.
\end{proof}

\begin{lemma}
\label{lem:chain-extension}
Let $P$ be a finite poset and let $m\ge0$.
If $P\times C_m$ admits an AHP (resp. AHC), then $P\times C_{m+2}$ admits an AHP (resp. AHC).
\end{lemma}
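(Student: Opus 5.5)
The plan is to obtain $P\times C_{m+2}$ from $P\times C_m$ by adding a new bottom and a new top level, and to splice the new vertices locally into the given alternating path or cycle. Relabel $C_{m+2}$ as $1<2<\cdots<m+3$ and identify $P\times C_m$ with the middle part $P\times\{2,\ldots,m+2\}$. This identification preserves order, so an AHP (resp. AHC) $y_1,\ldots,y_N$ of $P\times C_m$ is an alternating path (resp. cycle) in $P\times C_{m+2}$. The vertices not yet covered are $(p,1)$ and $(p,m+3)$ for $p\in P$. Both are comparable to the whole fiber of $p$, and more generally $(p,1)<(q,d)$ and $(q,d)<(p,m+3)$ whenever $p\le q$, respectively $q\le p$.

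\textbf{Key local moves.} Each step of the path can absorb one or two new pairs.
\begin{itemize}
\item An up-step $(p,c)<(q,d)$ with $p\le q$ can be replaced by
\[
(p,c)<(p,m+3)>(p,1)<(q,d),
\]
which absorbs the pair for $p$, the first coordinate of the lower endpoint.
\item A down-step $(q,d)>(p,c)$ with $p\le q$ can be replaced by
\[
(q,d)>(q,1)<(q,m+3)>(p,c),
\]
which absorbs the pair for $q$, the first coordinate of the higher endpoint.
\item If a single up-step $(p,c)<(q,d)$ must absorb both the pair for $p$ and the pair for $q$, use
\[
(p,c)<(p,m+3)>(p,1)<(q,m+3)>(q,1)<(q,d).
\]
This is valid because $p\le q$ gives $(p,1)<(q,m+3)$.
\end{itemize}
In every case the alternation pattern is preserved, and the endpoints of the path or cycle are unchanged.

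\textbf{Assignment.} For each $p\in P$, choose any vertex $v_p=(p,c)$ in its fiber and an edge of the path or cycle incident to $v_p$. If $v_p$ is the lower end of that edge, it is an up-step from $v_p$; if it is the upper end, it is a down-step from $v_p$. In both cases the move above absorbs the pair of $p$. Such an edge exists whenever $N\ge2$, since every vertex, including a path endpoint, has at least one neighbor. Each fiber is nonempty, so every $p$ gets assigned.

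\textbf{Collisions.} A single edge can only be assigned to the first coordinates of its two endpoints, so at most two pairs land on the same edge. If both are assigned, apply the third move. It also handles $p=q$: then it is the same pair, which we insert only once. After all insertions we have traversed every vertex exactly once, giving an AHP (resp. AHC) of $P\times C_{m+2}$. For a cycle the length stays even and is at least $4$.

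\textbf{Degenerate case and main obstacle.} The only remaining case is $N=1$, i.e. $|P|=1$ and $m=0$. Then $P\times C_2\cong C_2$ is a chain, which has an AHP by~\eqref{chain-AHP}; a single vertex admits no AHC, so the AHC version is vacuous there. The main point to check carefully is that the double insertion on a shared edge is valid in both directions. For a down-step $(q,d)>(p,c)$ carrying both pairs, the mirror sequence
\[
(q,d)>(q,1)<(q,m+3)>(p,1)<(p,m+3)>(p,c)
\]
works, since $(p,1)<(q,m+3)$.
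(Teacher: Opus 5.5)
Your proof is correct, but it takes a different route from the paper's.

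\textbf{The paper's route.} The paper relabels levels $j\ge 2$ as $j+2$. It then expands each bottom vertex $(x,1)$ in place into a three-term zigzag on the new levels $1,2,3$ of the same fiber: $(x,1)<(x,3)>(x,2)$ if $(x,1)$ is a local minimum of the path, and $(x,3)>(x,1)<(x,2)$ if it is a local maximum. Validity is checked in three cases, according to the levels of consecutive vertices.

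\textbf{Your route.} You keep the old path intact on the middle levels $2,\ldots,m+2$ and subdivide edges instead. For an edge $L=(p,c)<U=(q,d)$, the segment $L,(r,m+3),(r,1),U$ is alternating for every $r$ with $p\le r\le q$. Your two single moves are the cases $r=p$ and $r=q$, and the double insertion works because $(p,1)<(q,m+3)$.

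\textbf{What each buys.} The paper's construction is choice-free and needs no bookkeeping. Yours needs the assignment of fibers to incident edges and the collision rule. In exchange, your verification is a single uniform inequality check with no local-minimum/maximum case split, and it preserves the endpoints of the path. It also uses only that the new bottom lies below and the new top lies above the whole old fiber. So it proves more generally that an AHP (resp.\ AHC) of $P\times Q$ yields one of $P\times\widehat Q$ for any finite poset $Q$, where $\widehat Q$ is $Q$ with a new minimum and a new maximum adjoined.

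\textbf{A presentational point.} Your single moves are written for a fixed direction of traversal, but the assignment applies them ``from $v_p$'', which may run against the direction of the path. This is harmless, because being alternating is invariant under reversing a sequence. It is worth saying this once for all moves, rather than only for the double insertion.
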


\begin{proof}
We first consider the AHP case. 
Let $\mathcal P$ be an AHP in $P\times C_m$.
If $\mathcal P$ consists of a single vertex $(x,1)$, then
$(x,1)<(x,3)>(x,2)$ is the required AHP.
We now assume that $|\mathcal P|\ge 2$.
First, we give an operation on $\mathcal P$.
\begin{itemize}
    \item For $j\ge 2$, replace every vertex $(x,j)$ in $\mathcal {P}$ by $(x,j+2)$.
    \item For $j=1$, replace $(x,1)$ in the following way:
$$
\begin{cases}
(x,1)<(x,3)>(x,2) \qquad \textrm{when}\quad  \cdots>(x,1)<\cdots \quad  \textrm{in } \, \mathcal P;\\
(x,3)>(x,1)<(x,2) \qquad \textrm{when}\quad \cdots<(x,1)>\cdots \quad \textrm{in } \,\mathcal P.
\end{cases}
$$
\end{itemize}
If $(x,1)$ is an endpoint of $\mathcal P$, use the first
replacement when $(x,1)$ is smaller than its unique neighbor,
and use the second replacement when it is larger than its
unique neighbor.
Applying the operation to every vertex of $\mathcal P$ yields a vertex sequence $\mathcal {Q}$ in $P\times C_{m+2}$.

We now show that $\mathcal Q$ is an AHP in $P\times C_{m+2}$.
Let $(x,i)$ and $(y,j)$ be consecutive in $\mathcal P$.
It suffices to show that this operation preserves the alternating order relation of  $(x,i)$ and $(y,j)$.

\smallskip
\noindent\textbf{Case 1: $i,j\ge 2$.}
We obtain $(x,i+2)$ and $(y,j+2)$ in $\mathcal Q$. 
And $(x,i+2)<(y,j+2)$ if and only if $(x,i)<(y,j)$.
The alternating order relation is preserved.

\smallskip
\noindent\textbf{Case 2: $i=j=1$.}
If $(x,1)<(y,1)$, then $x<y$ in $P$, and the operation gives
\[
(x,1)<(x,3)>(x,2)<(y,3)>(y,1)<(y,2).
\]
Similarly, if $(x,1)>(y,1)$, then $x>y$, and the operation gives
\[
(x,3)>(x,1)<(x,2)>(y,1)<(y,3)>(y,2).
\]
The alternating order relation is preserved.

\smallskip
\noindent\textbf{Case 3: $i=1,j\ge 2$ or $i\ge 2,j=1$.}
Consider $(x,1)$ and $(y,j)$, where $j\ge2$.
Then we have  $(x,1)<(y,j)$, which implies that $x\le y$.
Then 
\[
(x,1)<(x,3)>(x,2)<(y,j+2).
\]
Similarly, consider $(x,j)$ and $(y,1)$, where $j\ge2$.
Then $x\ge y$ and
\[
(x,j+2)>(y,1)<(y,3)>(y,2).
\]
In both cases, the alternating order relation is preserved.

For each $x\in P$, the replacements use each chain coordinate $1,\ldots,m+3$ exactly once.
Thus $\mathcal Q$ visits every vertex of $P\times C_{m+2}$ exactly once and is an AHP.

For the AHC case, we operate on $(x,j)$ in an AHC of $P\times C_m$ in the same way.
Similarly, we can prove that $P\times C_{m+2}$ admits an AHC.
This completes the proof.
\end{proof}

We are ready to prove Theorem~\ref{thm:intro-chains}.

\begin{proof}[Proof of Theorem~\ref{thm:intro-chains}]
By Lemmas~\ref{lem:c3} and~\ref{lem:chain-extension}, if $P$ admits an AHP, then $P\times C_m$ admits an AHC for every odd $m\ge3$.
This proves (ii).

Starting with $P\times C_0\cong P$, repeated applications of Lemma~\ref{lem:chain-extension} give an AHP (resp., AHC) in $P\times C_m$ for every even $m\ge0$ whenever $P$ admits an AHP (resp., AHC).
Together with Lemma~\ref{lem:basic-properties} (i), these observations prove (i) and (iii).

Suppose that $P$ admits an AHC.
By Lemma~\ref{lem:basic-properties} (i), $P$ admits an AHP. 
Applying part (i), we obtain that $P\times C_m$ admits an AHP for every $m\ne 1$.
Since $C_1\cong B_1$, by Lemma~\ref{lem:b1}, $P\times C_1$ admits an AHP.
This proves (iv).





\end{proof}

\begin{remark}
\label{rem:chain-sharpness}
The ranges in (i)--(iii) of Theorem~\ref{thm:intro-chains} are sharp.
For (i), the snake $S_5$ admits an AHP, but $S_5\times C_1$ admits no AHP.
For (ii), taking $P=C_1$ and $m=1$ gives $P\times C_1\cong B_2$, which admits no AHC.
For every even $m\ge0$, take $P=C_2$.
Then $C_2$ admits the AHP $1<3>2$ but $C_2\times C_m$ has $3(m+1)$ elements, an odd number.
By Lemma~\ref{lem:basic-properties} (ii), $C_2\times C_m$ admits no AHC.
Thus the range in part (ii) cannot be enlarged.
For (iii), let $Q$ have elements $a_1,a_2,b_1,b_2$, with the only strict order relations given by $a_i<b_j$ for all $i,j\in\{1,2\}$.
Then $a_1<b_1>a_2<b_2>a_1$ is an AHC in $Q$, but $Q\times C_1$ admits no AHC.
\end{remark}





\section{AHP and AHC in divisor lattices}
\label{sec:products}
In this section, we first prove Theorem~\ref{thm:intro-products}. 
Then we study the weak embeddings of snakes into divisor lattices.

\begin{lemma}
\label{lem:b1-chain}
For every $r\ge1$ and $m\ge2$, $B_r\times C_m$ admits an AHC.
\end{lemma}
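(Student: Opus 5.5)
We would prove this by splitting on $r$ and on the parity of $m$, reducing everything to results already established. The idea is to regard $B_r\times C_m$ as $P\times Q$, where one factor admits an AHP or AHC, and then apply Theorem~\ref{thm:intro-boolean} or Theorem~\ref{thm:intro-chains}.

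\textbf{Case $r\ge2$.} Take $P=C_m$, which admits an AHP by \eqref{chain-AHP} and has $|C_m|=m+1\ge3\ge2$. Theorem~\ref{thm:intro-boolean}(i) then gives an AHC in $C_m\times B_r\cong B_r\times C_m$.

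\textbf{Case $r=1$ and $m$ odd.} Here $m\ge3$. Take $P=B_1$, which admits an AHP, and apply Theorem~\ref{thm:intro-chains}(ii) to obtain an AHC in $B_1\times C_m$.

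\textbf{Case $r=1$ and $m$ even, $m\ge2$.} This is the main obstacle. Theorem~\ref{thm:intro-chains}(iii) with $P=B_1$ is unavailable, since $B_1$ has no AHC. Theorem~\ref{thm:intro-chains}(ii) with $P=C_1$ requires an odd chain index, and using $P=C_m$ with $B_1$ fails as well. Instead we would build an AHC in $C_1\times C_2$ by hand. This poset has six elements $(i,j)$ with $i\in\{1,2\}$ and $j\in\{1,2,3\}$, and one candidate cycle is
\[
(1,1)<(2,2)>(1,2)<(1,3)>(2,1)<(2,3)>(1,1).
\]
Each relation holds: $(2,2)>(1,2)$, $(1,2)<(1,3)$, $(1,3)>(2,1)$ fails, so we adjust the candidate. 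We would try
\[
(1,1)<(1,3)>(1,2)<(2,2)>(2,1)<(2,3)>(1,1).
\]
Its relations are $(1,1)<(1,3)$, $(1,3)>(1,2)$, $(1,2)<(2,2)$, $(2,2)>(2,1)$, $(2,1)<(2,3)$ and $(2,3)>(1,1)$, all valid, and all six vertices are distinct. So $B_1\times C_2$ admits an AHC.

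With this base case in hand, Lemma~\ref{lem:chain-extension} applied with $P=B_1$ repeatedly passes from $B_1\times C_m$ to $B_1\times C_{m+2}$ while preserving an AHC. This yields an AHC in $B_1\times C_m$ for every even $m\ge2$, and together with the two cases above completes the proof.
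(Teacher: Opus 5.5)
Your proof is correct and follows the paper's argument: for $r\ge2$ you apply Theorem~\ref{thm:intro-boolean}(i) to $C_m\times B_r$. For $r=1$ you use the same explicit AHC $(1,1)<(1,3)>(1,2)<(2,2)>(2,1)<(2,3)>(1,1)$ in $B_1\times C_2$ together with Lemma~\ref{lem:c3} (which you invoke as Theorem~\ref{thm:intro-chains}(ii)) and Lemma~\ref{lem:chain-extension}. In a final write-up, delete the abandoned first candidate cycle, which relies on the false relation $(1,3)>(2,1)$.
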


\begin{proof}
First consider $r=1$.
Identify $B_1$ with the chain $1<2$.
The product $B_1\times C_2$ admits the AHC
\[
(1,1)<(1,3)>(1,2)<(2,2)>(2,1)<(2,3)>(1,1).
\]
Since $B_1$ admits an AHP, Lemma~\ref{lem:c3} also gives an AHC in $B_1\times C_3$.
Repeated application of Lemma~\ref{lem:chain-extension} gives an AHC in $B_1\times C_m$ for every $m\ge2$.

For $r\ge 2$, since $C_m$ admits an AHP and $|C_m|=m+1\ge 3$, Theorem~\ref{thm:intro-boolean} (i) gives an AHC in $C_m\times B_r$.
\end{proof}

\begin{proof}[Proof of Theorem~\ref{thm:intro-products}]
Write 
\[
D_n\cong B_r\times C_{a_1}\times\cdots\times C_{a_s},\quad \textrm{where}\,\, r,s\ge0\,\,\textrm{and}\,\,  a_i\ge2\,\, \textrm{for all }\, i.
\]

\smallskip
(i). By Corollary~\ref{B-AHP}, $B_r$ admits an AHP.
Since $a_i\ge2$, repeated application of Theorem~\ref{thm:intro-chains} (i) gives an AHP in $D_n$.

\smallskip
(ii).
If $D_n$ admits an AHC, then Lemma~\ref{lem:basic-properties} (ii) implies that $|D_n|$ is even and $|D_n|\ge4$.
Hence $D_n\ncong C_1$.
Since $B_2$ admits no AHC, we also have $D_n\ncong B_2$.

Conversely, suppose that $|D_n|$ is even, $D_n\ncong C_1$, and $D_n\ncong B_2$.
If $r=0$, then
\[
|D_n|=\prod_{i=1}^s(a_i+1).
\]
Since $|D_n|$ is even, some $a_i$ is odd,
and hence $a_i\ge3$.
Write $D_n\cong R\times C_{a_i}$, where $R$ is a divisor lattice.
By (i), $R$ admits an AHP.
Thus Theorem~\ref{thm:intro-chains} (ii) gives an AHC in $D_n$.

Now suppose that $r\ge1$.
If $s=0$, then $D_n\cong B_r$.
The exclusions $D_n\ncong C_1$ and $D_n\ncong B_2$ imply that $r\ge3$.
Hence Corollary~\ref{coro:katona} gives an AHC in $D_n$.

If $s\ge1$, then Lemma~\ref{lem:b1-chain} gives an AHC in $B_r\times C_{a_1}$.
Since $a_i\ge2$ for $2\le i\le s$, repeated application of Theorem~\ref{thm:intro-chains} (iii) gives an AHC in $D_n$.

\end{proof}

Katona~\cite[Section~8]{Katona}
studied the  weak embeddings of snakes into Boolean lattices and 
suggested that $S_k\hookrightarrow B_n$ whenever $k\le2^n$.
We confirm this statement.
More generally,
we derive the following result on the weak embeddings of snakes into divisor lattices.



\begin{corollary}
\label{cor:product-snakes}
Let $D_n$ be the divisor lattice of a positive integer $n$.
For every integer $k\ge1$, the snake $S_k$ weakly embeds into
$D_n$ if and only if $k\le |D_n|$.
\end{corollary}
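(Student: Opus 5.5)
The plan is to prove the two directions separately, using Theorem~\ref{thm:intro-products}(i) together with the self-duality of divisor lattices.

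\emph{Necessity.} A weak embedding $f:S_k\to D_n$ is injective, so $k=|S_k|\le |D_n|$.

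\emph{Sufficiency.} Let $1\le k\le |D_n|$. By Theorem~\ref{thm:intro-products}(i), $D_n$ admits an AHP $y_1,\ldots,y_{|D_n|}$. If $|D_n|=1$, then $k=1$ and the statement is trivial. Otherwise the AHP has one of two forms:
\[
y_1<y_2>y_3<\cdots \qquad\text{or}\qquad y_1>y_2<y_3>\cdots.
\]
In the first case, the map $x_i\mapsto y_i$ for $1\le i\le k$ is injective. It sends each strict relation of $S_k$ listed in~\eqref{def:snake} to a true relation in $D_n$, so it is a weak embedding $S_k\hookrightarrow D_n$. Taking the first $k$ terms is what lets us handle every $k$ at once.

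In the second case, compose with a self-duality of $D_n$. Writing $D_n\cong C_{m_1}\times\cdots\times C_{m_d}$, the map $d\mapsto n/d$ is an order-reversing bijection $\varphi$ of $D_n$; equivalently, each chain is self-dual and duality commutes with products. Then $\varphi(y_1),\ldots,\varphi(y_{|D_n|})$ is an AHP of the first form. We then restrict to its first $k$ terms exactly as before.

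There is no real obstacle here. The only point needing care is the orientation of the AHP, and self-duality resolves it. This recovers Katona's statement as the special case $D_n\cong B_r$.
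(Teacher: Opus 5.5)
Your proposal is correct and follows the paper's proof: injectivity for necessity, and for sufficiency an AHP of $D_n$ from Theorem~\ref{thm:intro-products}(i), oriented via the self-duality $d\mapsto n/d$ and truncated to its first $k$ terms. You make explicit the orientation and truncation steps that the paper compresses into ``since $D_n$ is self-dual.''
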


\begin{proof}
If $S_k\hookrightarrow D_n$, then injectivity gives $k\le |D_n|$.
Conversely, assume $k\le |D_n|$.
By Theorem~\ref{thm:intro-products} (i), $D_n$ admits an AHP.
Since $D_n$ is self-dual, we obtain a weak embedding of $S_k$ into $D_n$.
This completes the proof.
\end{proof}
\begin{remark}
In particular, $S_k\hookrightarrow B_n$ if and only if $k\le2^n$, for all $n\ge0$ and $k\ge1$.
\end{remark}

\section{Applications to snake-free families}
\label{sec:applications}
For a snake $S_k$ defined in~\eqref{def:snake},
a subset of a poset is called \emph{$S_k$-free} if it admits no weak embedding of $S_k$.
Snake-free families are closely related to AHPs.
For a finite poset $P$, define
\[
\La(P,S_k)
=
\max\bigl\{
|\mathcal F|:
\mathcal F\subseteq P
\text{ is }S_k\text{-free}
\bigr\}.
\]
For $P=B_n$, write $\La(n,S_k)=\La(B_n,S_k)$.
Let $w(R)$ denote the maximum size of an antichain in a finite poset $R$.
We obtain the following result.


\begin{proposition}
\label{prop:product-snake-free}
For a divisor lattice $Q$ and a finite poset $R$, let $\mathcal A$ be a nonempty antichain of $R$.
Then the family
\[
\mathcal F=Q\times\mathcal A\subseteq Q\times R
\]
is $S_k$-free if and only if $k>|Q|$.
Moreover, for $k>|Q|$,
\[
\La(Q\times R,S_k)
\ge |Q|\,w(R).
\]
\end{proposition}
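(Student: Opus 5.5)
The proof has two directions, followed by the bound on $\La$.

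\emph{Structure of $\mathcal F$.} Since $\mathcal A$ is an antichain, two elements $(q,a),(q',a')$ of $\mathcal F=Q\times\mathcal A$ are comparable only when $a=a'$. In that case they are comparable exactly when $q$ and $q'$ are comparable in $Q$. So the comparability graph of $\mathcal F$ is the disjoint union of $|\mathcal A|$ copies of the comparability graph of $Q$, one copy on each fiber $Q\times\{a\}$. Each fiber is order-isomorphic to $Q$.

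\emph{If $k\le|Q|$, then $\mathcal F$ contains $S_k$.} By Corollary~\ref{cor:product-snakes}, $S_k\hookrightarrow Q$ whenever $k\le |Q|$, since $Q$ is a divisor lattice. Composing that weak embedding with $q\mapsto (q,a)$ for any fixed $a\in\mathcal A$ (possible because $\mathcal A$ is nonempty) gives a weak embedding of $S_k$ into $\mathcal F$.

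\emph{If $k>|Q|$, then $\mathcal F$ is $S_k$-free.} Suppose $f:S_k\to\mathcal F$ is a weak embedding. The case $k=1$ does not occur, because $|Q|\ge1$ forces $k\ge2$. For $k\ge2$, consecutive elements $x_i,x_{i+1}$ of the snake are comparable, so $f(x_i)$ and $f(x_{i+1})$ are comparable in $\mathcal F$. By the fiber structure they lie in the same fiber. Since the snake is connected through the relations $x_1<x_2>x_3<\cdots$, every $f(x_i)$ lies in a single fiber $Q\times\{a\}$. Injectivity then gives $k\le |Q|$, a contradiction.

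\emph{The bound on $\La$.} Take $\mathcal A$ to be a maximum antichain of $R$, so $|\mathcal A|=w(R)$, which is nonempty because posets are nonempty. Then $\mathcal F=Q\times\mathcal A$ is $S_k$-free for $k>|Q|$ and has $|\mathcal F|=|Q|\,w(R)$, so $\La(Q\times R,S_k)\ge |Q|\,w(R)$.

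The only substantive step is the fiber-connectivity argument in the $S_k$-free direction, and it is short.
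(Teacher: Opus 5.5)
Your proposal is correct and follows essentially the same route as the paper. The antichain forces comparable elements of $\mathcal F$ into a single fiber $Q\times\{a\}$, and the connectedness of the snake then confines any weak embedding to one fiber. Corollary~\ref{cor:product-snakes} together with injectivity settles both directions, and a maximum antichain gives the bound on $\La$.
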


\begin{proof}
For each $a\in\mathcal A$, the family $Q\times\{a\}$ is a copy of $Q$.
Since $\mathcal A$ is an antichain, elements from distinct copies are incomparable.

Consecutive elements of $S_k$ are comparable.
Hence, the image of every weak embedding of $S_k$ into $\mathcal F$ lies in a single copy $Q\times\{a\}$.
By Corollary~\ref{cor:product-snakes}, such an embedding
exists if and only if $k\le |Q|$.
Thus $\mathcal F$ is $S_k$-free if and only if $k>|Q|$.

Counting the copies of $Q$ gives $|\mathcal F|=|Q|\,|\mathcal A|$.
Taking $|\mathcal A|=w(R)$ gives the stated lower bound.
\end{proof}

\begin{corollary}
\label{cor:divisor-snake-bound}
Let $n$ be a positive integer, and let $a,b$ be coprime positive integers with $ab\mid n$.
For every integer $k>|D_a|$, we have
\[
\La(D_n,S_k)\ge |D_a|\,w(D_b).
\]
\end{corollary}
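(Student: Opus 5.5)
The plan is to reduce the statement directly to Proposition~\ref{prop:product-snake-free} by writing $D_n$ as a product in which $D_a$ is one factor. The coprimality of $a$ and $b$ is what makes this possible.

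First, since $\gcd(a,b)=1$, the divisor lattice $D_{ab}$ factors as $D_{ab}\cong D_a\times D_b$. Concretely, every divisor $d$ of $ab$ corresponds to the pair $(\gcd(d,a),\gcd(d,b))$, and divisibility is preserved in both directions. Next, set $c=n/(ab)$, which is an integer because $ab\mid n$. The lattice $D_{ab}$ need not be a direct factor of $D_n$, since $c$ need not be coprime to $ab$.

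To handle this, I would use the prime factorization. Write $n=\prod p_i^{m_i}$, so that $D_n\cong\prod_i C_{m_i}$. Each prime exponent of $a$ is at most $m_i$, and the same holds for $b$. Because $a$ and $b$ are coprime, at each prime at most one of them has a positive exponent. I would therefore let $b'$ be the part of $n$ coprime to $a$, namely $b'=n/a^{*}$, where $a^{*}$ is the largest divisor of $n$ supported on the primes of $a$.

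This choice does not give $D_n$ as a product with factor $D_a$ directly, since $a^{*}$ may exceed $a$. The cleaner route is to put $R=\prod_{p_i\mid a}C_{m_i-\alpha_i}\times\prod_{p_i\nmid a}C_{m_i}$, where $\alpha_i$ is the exponent of $p_i$ in $a$. We cannot simply use $D_n\cong D_a\times R$, however, because chain lengths do not add under products. Instead I would use a weak embedding (an order-preserving injection) $D_a\times R\hookrightarrow D_n$. Simpler still is to take $R=D_{n/a}$ and the map $(d,e)\mapsto$ a divisor of $n$; no canonical such map exists in general, so this is where care is needed.

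The simplest rigorous route is to find a subposet $D_a\times D_b\subseteq D_n$ directly, via $(d,e)\mapsto de$. This map is an order isomorphism onto the induced subposet $D_{ab}\subseteq D_n$. The family $\mathcal F=D_a\times\mathcal A$ for a maximum antichain $\mathcal A$ of $D_b$ is $S_k$-free inside $D_a\times D_b$ by Proposition~\ref{prop:product-snake-free} (applied with $Q=D_a$, $R=D_b$, and $k>|D_a|$). Since $D_{ab}$ is an induced subposet of $D_n$, a weak embedding of $S_k$ into the image of $\mathcal F$ in $D_n$ is the same thing as a weak embedding into $\mathcal F$. So $\mathcal F$ remains $S_k$-free as a subfamily of $D_n$. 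It has size $|D_a|\,w(D_b)$, which gives the claimed bound $\La(D_n,S_k)\ge |D_a|\,w(D_b)$.

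The main obstacle is just this transfer step: checking that $S_k$-freeness depends only on the induced order on $\mathcal F$. This holds because a weak embedding into $\mathcal F$ only uses comparabilities between elements of $\mathcal F$, and these are the same in $D_{ab}$ and in $D_n$. The other ingredients are the isomorphism $D_{ab}\cong D_a\times D_b$ for coprime $a,b$ and Proposition~\ref{prop:product-snake-free}.
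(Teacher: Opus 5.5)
Your final route is correct and is essentially the paper's argument. You identify $D_a\times D_b$ with the induced subposet $D_{ab}\subseteq D_n$ via $(d,e)\mapsto de$, apply Proposition~\ref{prop:product-snake-free} with $Q=D_a$, $R=D_b$ and $\mathcal A$ a maximum antichain of $D_b$, and note that $S_k$-freeness depends only on the induced order; that last transfer step is spelled out more explicitly than in the paper, and the earlier detours through $a^{*}$, $b'$ and the product of shortened chains are unnecessary and can be deleted.
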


\begin{proof}
Let $a,b$ be positive integers satisfying
$ab\mid n$, $\gcd(a,b)=1$, and $|D_a|<k$.
By coprimality, we have $D_a\times D_b\cong D_{ab}$.
Moreover, $D_{ab}$ is an induced subposet of $D_n$.

Let $\mathcal A$ be a maximum antichain of $D_b$.
By Proposition~\ref{prop:product-snake-free},
$D_a\times\mathcal A$ is $S_k$-free and has
$|D_a|\,w(D_b)$ elements.
This gives an $S_k$-free subset of $D_n$ with $|D_a|w(D_b)$ elements.
\end{proof}

For Boolean lattices, this recovers the construction of Katona~\cite[Proposition~8.2]{Katona}.
Fix $0\le\ell\le n$ and an $\ell$-element set $T\subseteq[n]$.
Taking $Q=B_\ell$, $R=B_{n-\ell}$, and $\mathcal A$ to be a middle level of $R$ gives
\[
\mathcal F_{n,\ell}
=
\left\{
A\cup C:
A\subseteq T,\quad
C\subseteq[n]\setminus T,\quad
|C|=\left\lfloor\frac{n-\ell}{2}\right\rfloor
\right\}.
\]
By Proposition~\ref{prop:product-snake-free}, $\mathcal F_{n,\ell}$ is $S_k$-free exactly when $k>2^\ell$, which confirms the applicability condition discussed by Katona~\cite[Section~8]{Katona}.
Its size is
\[
|\mathcal F_{n,\ell}|
=
2^\ell
\binom{n-\ell}{\lfloor(n-\ell)/2\rfloor}.
\]
For $k\ge2$, choosing
\[
\ell=\min\{n,\lfloor\log_2(k-1)\rfloor\}
\]
therefore gives
\[
\La(n,S_k)
\ge
2^\ell
\binom{n-\ell}{\lfloor(n-\ell)/2\rfloor}.
\]

\end{document}